\numberwithin{equation}{section}
\DeclareMathOperator{\spfl}{sf}
\newtheorem{prop}{Proposition}[section]
\newtheorem{lem}[prop]{Lemma}
\newtheorem{dfn}[prop]{Definition}
\newtheorem{theorem}[prop]{Theorem}
\newtheorem{ass}[prop]{Assumption}
{\par \vspace{0cm}\ \\ \noindent \%\color{blue}\% \begin{tiny}}{\par\end{tiny}\%\color{black}\%\vspace{0cm}}
{\par \vspace{0cm}\ \\ \noindent \%\color{red}\% \begin{tiny}}{\par\end{tiny}\%\color{black}\%\vspace{0cm}}
\def\ds{\displaystyle}
\def\dim{\mathop{\rm dim}}
\def\Im{\mathop{\rm Im}}
\def\vol{\mathop{\rm vol}}
\def\End{\mathop{\rm End}}
\def\Cinf{C^{\infty}}
\def\Ker{\mathop{\rm Ker}}
\def\dom{\mathop{\rm Dom}}
\def\ov{\overline}
\def\id{\mathop{\rm id}}
\def\ra{\partial}
\def\ten{\otimes}
\def\phi{\varphi}
\def\de{\delta}
\def\ve{\varepsilon}
\def\a{\alpha}
\def\O{\Omega}
\def\b{\beta}
\def\la{\lambda}
\def\s{\sigma}
\def\t{\tau}
\def\R{\mathbin{\mathbb R}}
\def\Z{\mathbin{\mathbb Z}}
\def\bbbn{\mathbin{\mathbb N}}
\def\C{\mathbb{C}}
\def\cA{{\mathcal A}}
\def\cD{{\mathcal D}}\def\cE{\mathcal E}\def\cF{{\mathcal F}}\def\cR{{\mathcal R}}
\def\cN{\mathcal N}
\def\cJ{\mathcal J}
\def\cM{\mathcal M}
\def\fD{\mathfrak D}
\newcommand{\n}[1]{\left\| #1\right\|}
\def\tr{\mathop{\rm tr}\nolimits}
\def\trl{\mathop{\rm tr_\Lambda}}
\def\Tr{\mathop{\rm Tr}\nolimits}
\def\supp{\mathop{\rm supp}}
\def\ind{\mathop{\rm ind}\nolimits}
\newcommand\Di{D\kern-7pt/}
\title{Spectral flow, index and the signature operator}
\author{Sara Azzali}
\address{Sara Azzali\\Mathematisches Institut\\Bunsenstr. 3-5 \\37073 G\"ottingen\\
Germany}
\email{azzali@uni-math.gwdg.de}
\thanks{}
\author{Charlotte Wahl}
\address{Charlotte Wahl\\Leibniz-Archiv\\Waterloostr. 8\\ 30169 Hannover\\ Germany}
\email{wahlcharlotte@googlemail.com}
\thanks{}
\begin{document}
\begin{abstract}
We relate the spectral flow to the index for paths of selfadjoint Breuer-Fredholm operators affiliated to a semifinite von Neumann algebra, generalizing results of Robbin-Salamon and Pushnitski. Then we prove the vanishing of the von Neumann spectral flow for the tangential signature operator of a foliated manifold when the metric is varied. We conclude that the tangential signature of a foliated manifold with boundary does not depend on the metric. 
In the Appendix we reconsider integral formulas for the spectral flow of paths of bounded operators.
\end{abstract}
\maketitle

\tableofcontents

\section{Introduction}

Since its introduction by Breuer \cite{B1,B2}, index theory in von Neumann algebras has been extensively developed, motivated by the geometric situations of coverings and foliations. The foundations can be traced back in the work of Atiyah on the \mbox{$L^2$-index} theorem \cite{A}, and then in the index theorem of Connes for measured foliations \cite{Co}. 
The corresponding index theorems for manifolds with boundary with Atiyah-Patodi-Singer boundary conditions are a work of Ramachandran \cite{Ram}. Cylindrical ends counterparts have been proven for coverings by Vaillant \cite{Va} and for foliations by Antonini \cite{An}, who applied their results to the definition and study of $L^2$-signatures for coverings of manifolds with boundary and tangential signatures for measured foliations with boundary \cite{An2}, respectively. 

In parallel the notion of \emph{spectral flow} for paths of selfadjoint Breuer-Fredholm operators affiliated to a von Neumann algebra has been the subject of many investigations. The definition of the spectral flow in type II von Neumann algebras is due to Phillips \cite{Ph}. It is based on the idea that the spectral flow of a path of such operators $(D_u)_{u \in [0,1]}$ measures the discontinuity of the path of projections onto the positive part of the spectrum. His definition applies to continuous paths of operators with respect to the Riesz topology. A more general definition was given in terms of a winding number in \cite{Wa}. Both approaches have been used to prove integral formulas for the spectral flow, see for example \cite{CP}\cite{Wa}\cite{CPS}.  

A natural question is whether, as in the classical case, the \emph{spectral flow} of a path of affiliated operators $(D_u)_{u\in [0,1]}$ equals the \emph{von Neumann index} of the operator $\partial_u+D_u$ (with Atiyah-Patodi-Singer conditions when the endpoints are not invertible). 

For Dirac operators on a closed odd-dimensional manifold this follows for example from the variational formula for $\eta$-invariants \cite{Me}. For a fixed Dirac operator which has been perturbed by a path of endomorphisms, a corresponding result in the $L^2$-index theory for coverings was established in \cite{bcp}. In the classical situation, the equality ``index = spectral flow'' was proven for a path of general selfadjoint operators with compact resolvents via an axiomatic approach in \cite{RS}. The conditions on the path were further relaxed in \cite{Ra}. 
A noncommutative ($C^*$-algebraic) version has been derived in \cite{LP} for Dirac operators, and for more general operators on Hilbert $C^*$-modules in \cite{wans}.

The first result of this paper is the equality ``spectral flow = index'' for a path of selfadjoint operators with common domain, and resolvents in the ideal $K(\cN)$, where $\mathcal N$ is a semifinite von Neumann algebra. In particular, Theorem \ref{spflind} deals with the case when the endpoints are invertible. We use only few properties of the spectral flow and the index for the proof (namely homotopy invariance, additivity with respect to concatenation of paths and to direct sums, and a normalization property). It is based on the ideas of the proofs in the noncommutative case in \cite{LP} and \cite{wans}. Our approach can also be used to generalize these results further, see the remarks following Lemma \ref{afffred}. Our proof is different from the one in \cite{RS}\cite{Ra}, which makes use of the discreteness of the spectrum in the classical case and thus does not generalize to our situation.

Along the way we also get the equality $\spfl((D+K_u)_{u \in [0,1]})=\ind(\ra_u + D + K_u) $ for a path of symmetric operators $K_u$ which are relatively bounded with respect to a selfadjoint Breuer-Fredholm operator $D$ with bounds $\alpha_u<1$ and for which $(K_uD^{-1})_{u \in [0,1]}$ is a continuous path in $K(\cN)$ (Prop. \ref{push}). Here we do not assume that the resolvents of $D$ are in $K(\cN)$. This is indeed another main result of the paper, which generalizes a result of Pushnitski \cite{pu}. Note that to this aim Lemmas \ref{D+Kestimate} and \ref{inverse} are proven under more general conditions than necessary just for the proof of Theorem \ref{spflind}.

Theorem \ref{spfl=ind APS} proves the equality for a path with non-invertible endpoints using the Atiyah-Patodi-Singer index. Here we show that the unbounded operator with Atiyah-Patodi-Singer boundary conditions $(\partial_u+D_u)^{APS}$ is Breuer-Fredholm. This is done essentially as in \cite{Ram}.

Next we translate our results to the \emph{type II} geometric situation of foliated manifolds admitting a holonomy invariant transverse measure (Prop. \ref{spflind geometric}).
We investigate in particular the tangential signature operator. In the classical case of the signature operator on a closed manifold, it is well known that a variation of the metric on the manifold does not produce spectral flow. This comes from the strong link between the kernel of the signature operator and the cohomology of the manifold.
Using the integral formula for the spectral flow (\cite{CP,Wa}) we get an analogous result for foliated manifolds (Prop. \ref{sf=0}).
The main step in the proof is based on a beautiful lemma by Cheeger and Gromov (Lemma \ref{lem-cg}), which translates the cohomological nature of the kernel of the signature operator into an analytic property. Our proof also yields a reinterpretation of the result of Cheeger and Gromov in terms of spectral flow.

As an application of our results, we prove that the measured analytic signature of a foliated manifold with boundary, which was defined in \cite{An}, is independent of the metric. 
This follows from the vanishing spectral flow for the signature operator and from a von Neumann relative index theorem by similar arguments as in \cite{LP}. 

Our methods also apply to $L^2$-signatures for manifolds with boundary, which have been defined in \cite{Va} and studied further in \cite{LS}. However, in this case the vanishing of the $L^2$-spectral flow for the signature operator is a rather direct consequence of the vanishing of the ordinary spectral flow, and the independence of the $L^2$-$\rho$-invariants of the metric. 

Appendix \ref{integral formula} is related to but independent of the main body of the paper. Here we contribute to the investigation of integral formulas for the spectral flow. The integral formulas of Carey and Phillips \cite{CP} for bounded perturbations of a fixed selfadjoint operator with resolvents in $K(\cN)$ have been generalized to paths of selfadjoint operator with resolvents in $K(\cN)$ and common domain in \cite{Wa} by exploiting the relation between the spectral flow and the winding number. By using a different approach the latter result was further extended in \cite{CPS}, in particular such that it applies to paths of bounded operators as well. In the Appendix we show that the proof of \cite{Wa} can be modified to include paths of bounded operators. Our result is not equivalent to the one in \cite{CPS}, nor is one a generalization of the other: indeed, we only assume that the path is strongly differentiable in a certain sense, while in \cite{CPS} the path has to be differentiable in the norm topology. On the other hand, the conditions on the interplay between summability and dependence on the parameter that are imposed in \cite{CPS} are weaker than ours.

\smallskip
{\it Acknowledgements:}
We would like to thank Moulay Benameur for interesting discussions. The second named author thanks him for an invitation to Metz, which started our collaboration. We also thank Paolo Piazza for drawing our attention to the question about the relation between spectral flow and index.

\section{Spectral flow and index}\label{sf=ind}
In this section we prove the equality between \emph{spectral flow} and \emph{index} (Theorems \ref{spflind} and \ref{spfl=ind APS}) in the general context of semifinite von Neumann algebras. 

Let $\cN$ be a von Neumann algebra acting on a separable Hilbert space $H$ and endowed with a faithful normal semifinite trace. The ideal $K(\cN)$ in $\cN$ is the smallest closed ideal containing all elements of finite trace.

There is an induced semifinite trace on the von Neumann algebraic tensor product $\cM:=B(L^2(\R)) \ten \cN$, which acts on $L^2(\R) \ten H\cong L^2(\R,H)$. In the following the index $\ind$ is defined with respect to the semifinite von Neumann algebra $\cM$, whereas the spectral flow $\spfl$ is defined with respect to $\cN$.

For the theory of Breuer-Fredholm operators in this setting we refer to \cite{CPRS}. We will in particular use the criterion that an unbounded operator affiliated to a semifinite von Neumann algebra is Breuer-Fredholm if it has a right and a left parametrix. This assertion is proven in Lemma 3.15 in \cite{CPRS} under the condition that the right and left parametrix agree. However the proof works also if they do not agree.

In general, for a closed operator $A$ on a Hilbert space $V$ we denote by $H(A)$ the space $\dom A$ endowed with the scalar product $\langle x,y\rangle_V + \langle Ax,Ay\rangle_V$.
This is a Hilbert space.

\begin{theorem}
\label{spflind}
Let $(D_u)_{u \in [0,1]}$ be a path of selfadjoint operators affiliated to $\cN$ with common domain and resolvents in $K(\cN)$. We assume that $D_u$ depends continuously on $u$ as a bounded operator from $H(D_0)$ to $H$ (with respect to the operator norm). Furthermore we assume that the endpoints $D_0,D_1$ are invertible. Then
$$\spfl((D_t)_{u \in [0,1]})=\ind (\ra_u + D_u) \ .$$
\end{theorem}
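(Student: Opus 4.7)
The plan is to adapt the axiomatic strategy used in the $C^*$-module settings \cite{LP,wans} to the semifinite von Neumann context: prove that both $\spfl$ and $\ind(\partial_u+D_u)$ satisfy the same short list of properties -- homotopy invariance, additivity under concatenation and under direct sums, and normalization on constant invertible paths -- and then use these to force the two quantities to coincide.

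The first technical step is to verify that $\partial_u+D_u$ is Breuer--Fredholm on $L^2(\R,H)$ as an operator affiliated to $\cM$. I extend the path constantly for $u\notin[0,1]$; invertibility of $D_0,D_1$ makes the cylindrical operators $\partial_u+D_0$ on $(-\infty,0]$ and $\partial_u+D_1$ on $[1,\infty)$ invertible on vector-valued $L^2$, via the Fourier transform in $u$ combined with the spectral calculus of $D_{0/1}$. On $[0,1]$ a local parametrix is built from $(i+D_u)^{-1}$: continuity of $D_u:H(D_0)\to H$ together with $(i+D_u)^{-1}\in K(\cN)$ forces the error to lie in $K(\cM)$. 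Patching with a partition of unity in $u$ yields right and left parametrices modulo $K(\cM)$, and Lemma 3.15 of \cite{CPRS} (used in the remarked stronger form that does not require the two parametrices to agree) applies.

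The second step verifies the axiomatic properties for both quantities. For $\spfl$ all four are classical Phillips-type properties. For $\ind$: homotopy invariance follows from norm continuity of $\partial_u+D_u$ as a Breuer--Fredholm family along the homotopy; direct sum additivity is immediate from the decomposition of $L^2(\R,H_1\oplus H_2)$; vanishing on constant invertible paths follows from the explicit invertibility of $\partial_u+D$ on $L^2(\R,H)$ whenever $D$ is selfadjoint and invertible. The subtlest property is concatenation additivity, an APS-type gluing: at the joining time one cuts $\R$ in two, uses invertibility of $D$ at the junction to match inverses on each half, and identifies the total index with the sum of the half-line indices, analogously to \cite{LP}.

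Granted these properties, the final step is reduction to a model. Through homotopy and concatenation with auxiliary invertible paths I deform $(D_u)$ to a straight-line path $(1-u)D_0+uD_1$, and decompose $H$ via the spectral projection $P:=\chi_{[-R,R]}(D_0)$; since $(i+D_0)^{-1}\in K(\cN)$, $P$ has finite trace, and for $R$ large enough $D_u(1-P)$ remains boundedly invertible along the path after a small homotopy. On the complement both $\spfl$ and $\ind$ vanish by invariance and normalization; on $PH$ the problem is reduced to a finite-trace situation, in which spectral flow equals index by a direct computation in the spirit of Robbin--Salamon. The main obstacle is precisely this last reduction: the spectrum of $D_u$ need not be discrete, so the classical argument counting individual eigenvalue crossings is unavailable, and the $K(\cN)$-valued resolvent hypothesis is what confines the spectral-flow-generating part of $D_u$ to a finite-trace subspace where the classical argument can be imitated. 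A secondary technical hurdle is concatenation additivity for the semifinite index, which is the Breuer--Fredholm analogue of an APS-style gluing formula and must be set up carefully within the suspended-operator framework of the theorem.
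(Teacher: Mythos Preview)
Your high-level plan matches the paper's: both arguments are axiomatic, establishing homotopy invariance, concatenation additivity (the cut-and-paste Lemma~\ref{cut}), direct-sum additivity, and a normalization. Your first two steps are essentially Lemmas~\ref{afffred} and~\ref{cut}.

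The genuine gap is in your final reduction. After passing to the straight-line path $D_0+u(D_1-D_0)$ you propose to split $H$ via $P=1_{[-R,R]}(D_0)$, claiming that on $(1-P)H$ the path stays invertible ``after a small homotopy'' while on $PH$ the problem becomes finite-trace. But $P$ commutes only with $D_0$, not with $D_u$ for $u>0$; equivalently, the perturbation $D_1-D_0$ is merely relatively \emph{bounded} with respect to $D_0$, not relatively compact (for two first-order elliptic operators on a closed manifold, $(D_1-D_0)D_0^{-1}$ is a zeroth-order pseudodifferential operator, bounded but not compact). Hence $PD_uP$ is not a selfadjoint operator on $PH$, the off-diagonal blocks $PD_u(1-P)$ do not vanish, and there is no reason for $(1-P)D_u(1-P)$ to be uniformly invertible. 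Your ``small homotopy'' has no obvious candidate.

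The paper's fix is an extra reduction step you are missing. One first subdivides $[0,1]$ so that on each $[u_i,u_{i+1}]$ a single \emph{bounded} $K_i\in K(\cN)$ makes $D_u+K_i$ invertible throughout (possible since $D_u:H(D_0)\to H$ is norm-continuous). Inserting short linear segments between $K_i$ and $K_{i+1}$, the path becomes a concatenation of pieces that are either invertible (trivial for both $\spfl$ and $\ind$) or of the form $D+K_u$ with $D$ \emph{fixed} invertible and $K_u\in K(\cN)$. For these pieces Proposition~\ref{push} applies: now $P_R=1_{[-R,R]}(D)$ commutes with $D$, and Lemma~\ref{inverse} shows one can homotope $K_u$ to $P_RK_uP_R$ while keeping the endpoints invertible, genuinely reducing to a bounded and then finite-trace situation. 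It is precisely this intermediate passage to ``fixed $D$ plus compact perturbation'' that makes the spectral cutoff compatible with the whole path.
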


We divide the proof of the theorem into several lemmata. First we fix some conventions, which will be tacitly applied in similar situations throughout the paper.

We extend $(D_u)_{u \in [0,1]}$ to a path on $\R$ by setting $D_u=D_0$ for $u<0$ and $D_u=D_1$ for $u>1$. We set $\cD:=\ra_u+D_u$, which we understand as a closed operator on $L^2(\R,H)$ having $\Cinf_c(\R,H)$ as a core. 

For an open interval $U=(u_0,u_1) \subset \R$ we define the path 
$$D^U_u=\left\{\begin{array}{ll} D_{u_0} & u \le u_0 \\
D_u & u \in (u_0,u_1)\\
D_{u_1} & u \ge u_1 \ .
\end{array}\right.$$
We set $\cD^U=\ra_u + D^U$.

Define the symmetric closed operator $$\tilde \cD=\left(\begin{array}{cc} 0 & -\ra_u + D_u \\ \ra_u + D_u & 0 \end{array}\right) \ .$$ 
Furthermore 
for fixed $x \in \R$ set $\cD_x=\ra_u + D_x$,
$$\tilde \cD_x=\left(\begin{array}{cc} 0 & -\ra_u + D_x \\ \ra_u + D_x & 0 \end{array}\right)$$
and $$\tilde D_x=\left(\begin{array}{cc} 0 &  D_x \\ D_x & 0 \end{array}\right) \ .$$

The following technical lemma will be very useful.

\begin{lem}
\label{compinv}
Let $D$ be a selfadjoint invertible operator on $H$ affiliated to $\cN$ and let $K \in \cN$ be such that $KD^{-1} \in K(\cN)$. Let $\phi \in \Cinf_c(\R)$. Then 
$$\phi K (\pm \ra_u + D)^{-1} \in K(\cM) \ .$$
\end{lem}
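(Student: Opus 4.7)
My plan is to prove $\phi K(\pm\ra_u + D)^{-1} \in K(\cM)$ by approximating it in operator norm by Hilbert--Schmidt elements of $\cM$, using (i) the exponential decay of the convolution kernel of $(\pm\ra_u + D)^{-1}$, which follows from the invertibility of $D$, (ii) density of finite-trace elements in $K(\cN)$, and (iii) the compact support of $\phi$.

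Fourier transform on the $\R$-factor of $L^2(\R,H)$ conjugates $\pm\ra_u + D$ to multiplication by $\pm i\xi + D$. Since $\spec(D)\subset\R\setminus(-c,c)$ for some $c>0$, one has $\|D(\pm i\xi + D)^{-1}\|_\cN \le 1$, and the inverse Fourier transform exhibits $(\pm\ra_u + D)^{-1}$ as convolution with an $\cN$-valued kernel $G_\pm(w)$ satisfying $\|G_\pm(w)\|_\cN \le e^{-c|w|}$ (explicitly $G_+(w) = e^{-wD}P_{D>0}$ for $w>0$ and $G_+(w)=-e^{-wD}P_{D<0}$ for $w<0$). Let $P_N := \chi_{[-N,N]}(D) \in \cN$; since $P_N \to I$ strongly and $KD^{-1}\in K(\cN)$ is compact, $\|KD^{-1}(I-P_N)\|_\cN \to 0$. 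Using the commutation $[D,P_N] = 0$, which yields $(KD^{-1})(DP_N) = KP_N \in K(\cN)$, and the uniform bound $\|D(\pm\ra_u+D)^{-1}\|_\cM \le 1$, one obtains
$$\|\phi K(\pm\ra_u+D)^{-1} - \phi KP_N(\pm\ra_u+D)^{-1}\|_\cM \le \|\phi\|_\infty\, \|KD^{-1}(I-P_N)\|_\cN \to 0.$$
By the definition of $K(\cN)$, I can further approximate $KP_N$ in $\cN$-norm by elements $R\in\cN$ of finite trace, so that $\phi KP_N(\pm\ra_u+D)^{-1}$ is in turn norm-approximated by $\phi R(\pm\ra_u+D)^{-1}$.

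For such $R\in L^1(\cN)\cap\cN\subset L^2(\cN)$, the integral kernel of $\phi R(\pm\ra_u+D)^{-1}$ is $\phi(u) R G_\pm(u-v)$, and its Hilbert--Schmidt norm in $\cM$ is bounded by
$$\|\phi R(\pm\ra_u+D)^{-1}\|_{L^2(\cM)}^2 = \iint \|\phi(u) R G_\pm(u-v)\|_{L^2(\cN)}^2\,du\,dv \le \|R\|_{L^2(\cN)}^2\,\|\phi\|_{L^2}^2/c < \infty.$$
Hence $\phi R(\pm\ra_u+D)^{-1}\in L^2(\cM)\subset K(\cM)$, and by norm closure of $K(\cM)$ under the two successive approximations the conclusion $\phi K(\pm\ra_u+D)^{-1}\in K(\cM)$ follows.

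The main obstacle is that $\phi\otimes KD^{-1}$ is \emph{not} itself in $K(\cM)$ (multiplication by a nonzero $\phi\in C^\infty_c(\R)$ is not trace-class on $L^2(\R)$), and the factorization $K=(KD^{-1})D$ is not a (bounded)$\cdot$(compact) decomposition because $D$ is unbounded. The spectral truncation $P_N$ is the device that trades the unbounded $D$ for the bounded $DP_N$, with the resulting error controlled by compactness of $KD^{-1}$ together with the bound $\|D(\pm\ra_u+D)^{-1}\|\le 1$.
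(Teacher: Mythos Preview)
Your argument is correct, with one caveat. The justification ``since $P_N \to I$ strongly and $KD^{-1}\in K(\cN)$ is compact, $\|KD^{-1}(I-P_N)\|_\cN \to 0$'' invokes a principle valid for $\cN = B(H)$ but false in general semifinite von Neumann algebras: with $\cN = L^\infty[0,1]$ and Lebesgue trace one has $K(\cN)=\cN$, so $T=1$ is ``$\tau$-compact,'' yet $\|T\,\chi_{[0,1/n]}\|=1$. The conclusion you need is nevertheless immediate from $\|D^{-1}(I-P_N)\| \le 1/N$, giving $\|KD^{-1}(I-P_N)\| \le \|K\|/N \to 0$; compactness of $KD^{-1}$ plays no role at this step.

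Your route differs from the paper's. The paper passes to the Fourier side, where $\phi K(\pm\ra_u+D)^{-1}$ becomes the integral operator with kernel $\hat\phi(x-y)\,K(\pm iy+D)^{-1}$; since $K(\pm iy+D)^{-1}=KD^{-1}\cdot D(\pm iy+D)^{-1}\in K(\cN)$ with norm $\le \|K\|(y^2+c^2)^{-1/2}$, this kernel lies in $L^2(\R^2,K(\cN))$, and the paper invokes the principle that any such kernel yields an element of $K(\cM)$. You stay in position space with the exponentially decaying convolution kernel $G_\pm$ and reach $K(\cM)$ by a two-stage norm approximation (spectral cut-off $P_N$, then finite-trace replacement $R$) landing in $L^2(\cM)$. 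The paper's argument is shorter and more conceptual, but its ``key observation'' about $L^2(\R^2,K(\cN))$-kernels itself hides an approximation argument of the type you carry out explicitly; your proof is thus more self-contained. Note that your truncation step is genuinely needed in your scheme: since $\|DG_\pm(w)\|\sim |w|^{-1}$ near $w=0$ is not square-integrable, one cannot obtain a Hilbert--Schmidt bound directly from a finite-trace approximation of $KD^{-1}$ alone without first rendering $D$ bounded on the relevant subspace.
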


\begin{proof}
By Fourier transform $\pm \ra_u + D$ is unitarily equivalent (in $\cM$) to $\pm it + D$, which has an inverse in $\cM$. It follows that $\pm \ra_u + D$ has an inverse in $\cM$, and thus is affiliated to $\cM$. 

Any element in $L^2(\R^2)$ defines a compact integral operator on $L^2(\R)$, and by tensoring with the identity an element in $\cM$.

The key observation is that in a similar way any element in $L^2(\R^2,K(\cN))$ acts on $L^2(\R,H)$ defining an element in $K(\cM)$.

Via Fourier transform the operator $\phi K (\pm \ra_u + D)^{-1}$ on $L^2(\R,H)$ is unitarily equivalent to the operator
$$v \mapsto \int_{\R}\hat \phi(x-y) K (\pm iy+D)^{-1}v(y) ~dy \ .$$ 

The assertion follows now since $\hat \phi(x-y) K (\pm iy+D)^{-1} \in L^2(\R^2,K(\cN))$.
\end{proof}

The following lemma shows that the statement of the theorem is well-defined.

\begin{lem}
\label{afffred}
It holds that $\dom \cD=\dom (\ra_u+ D_0)$.

The operator $\cD$ is affiliated to $\cM$ and is Breuer-Fredholm.
\end{lem}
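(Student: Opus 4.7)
The plan is to dispatch the three assertions in order: the domain identity, the affiliation, and finally the Breuer-Fredholm property, which is the delicate step.

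First I set $A_u := D_u - D_0$. The continuity hypothesis gives a uniform bound $\|A_u\|_{H(D_0) \to H} \le C$ on $[0,1]$, and this persists under the constant extension to $\R$, so $A_u$ defines a bounded multiplication operator $L^2(\R, H(D_0)) \to L^2(\R, H)$. For $v \in \Cinf_c(\R, \dom D_0)$, the function $u \mapsto \langle v(u), D_0 v(u)\rangle_H$ is smooth and compactly supported, with derivative $2 \Re \langle \partial_u v, D_0 v\rangle_H$; integration yields $\Re \langle \partial_u v, D_0 v\rangle_{L^2(\R,H)} = 0$, hence
$$\|(\partial_u + D_0) v\|^2 = \|\partial_u v\|^2 + \|D_0 v\|^2.$$
Thus $A_u$ is $(\partial_u + D_0)$-bounded and the bounded perturbation $(\partial_u + D_0) + A_u$ is closed on $\dom(\partial_u + D_0)$. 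Since it agrees with $\cD$ on the core $\Cinf_c(\R, \dom D_0)$, it equals $\cD$. Affiliation is immediate: $\partial_u \otimes \id$ is affiliated to $B(L^2(\R)) \otimes 1 \subset \cM$ and $u \mapsto D_u$ acts as a multiplication operator affiliated to $L^\infty(\R, \cN) \subset \cM$.

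For the Breuer-Fredholm property I will exhibit right and left parametrices in $\cM$ and invoke the criterion (Lemma 3.15 of \cite{CPRS}) recalled before the statement. Choose smooth functions with $\chi_-^2 + \chi_0^2 + \chi_+^2 = 1$, where $\chi_-$ is supported in $(-\infty, 0)$ with $\chi_- \equiv 1$ on $(-\infty, -1)$, $\chi_+$ is symmetrically supported in $(1, \infty)$, and $\chi_0 \in \Cinf_c(\R)$. On $\supp \chi_-$ the operator $\cD$ coincides with $\cD_0 := \partial_u + D_0$, and on $\supp \chi_+$ with $\cD_1 := \partial_u + D_1$; since $D_0, D_1$ are invertible, the inverses $R_0 := \cD_0^{-1}$ and $R_1 := \cD_1^{-1}$ exist in $\cM$ via Fourier transform in $u$, as in the proof of Lemma \ref{compinv}. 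Set
$$Q := \chi_- R_0 \chi_- + \chi_+ R_1 \chi_+ + \chi_0 Q_0 \chi_0$$
with $Q_0$ to be chosen. Commuting each outer $\chi_i$ through $\cD$ via $[\cD, \chi_i] = \chi_i'$, and using the identification of $\cD$ with $\cD_0$ or $\cD_1$ on the respective supports, the main terms collapse to $\chi_-^2 + \chi_+^2 + \chi_0 \cD Q_0 \chi_0$; the commutator errors $\chi_-' R_0 \chi_-$ and $\chi_+' R_1 \chi_+$ lie in $K(\cM)$ by Lemma \ref{compinv} applied with $K = \id$ (using $D_0^{-1}, D_1^{-1} \in K(\cN)$). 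The parametrix property thus reduces to producing $Q_0$ with $\chi_0 \cD Q_0 \chi_0 \equiv \chi_0^2 \pmod{K(\cM)}$ and $\chi_0' Q_0 \chi_0 \in K(\cM)$.

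The construction of $Q_0$ is the technical heart and the main obstacle: because the interior operators $D_u$ for $u \in (0,1)$ may fail to be invertible, no direct inversion formula is available, and one has to leverage the compact $u$-support of $\chi_0$ together with the $K(\cN)$-valued resolvents of $D_u$ to force the residual errors into $K(\cM)$. This is a local parametrix construction for the continuous family $(D_u)_{u \in \supp \chi_0}$ of self-adjoint Breuer-Fredholm operators on a finite cylinder, patterned on Ramachandran \cite{Ram} and transplanted to the semifinite setting via Lemma \ref{compinv}-type arguments. A symmetric computation yields the left parametrix, and the cited criterion finishes the proof.
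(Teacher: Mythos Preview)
Your argument has two related gaps, both of which the paper fixes with a single device you are missing.

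For the domain identity, from the orthogonality relation you correctly deduce that $A_u = D_u - D_0$ is $(\partial_u + D_0)$-bounded, but the relative bound is $\sup_u \|A_u\|_{H(D_0)\to H}$, which need not be $<1$: only continuity in $u$ is assumed, not smallness. Without relative bound $<1$ the operator $(\partial_u + D_0) + A_u$ on $\dom(\partial_u + D_0)$ need not be closed (take $T = d/dx$, $A = -d/dx$ on $L^2(\R)$), so you cannot conclude $\dom \cD = \dom(\partial_u + D_0)$. The affiliation argument is likewise heuristic: the sum of two affiliated unbounded operators is not automatically affiliated. And for the Breuer-Fredholm part, your cutoff scheme near $\pm\infty$ is correct, but you explicitly do not construct the interior parametrix $Q_0$; saying it is ``patterned on Ramachandran'' is not a proof, and this is precisely the hard step.

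The paper resolves all three points at once by passing to the doubled selfadjoint operator $\tilde\cD$ and subdividing $[0,1]$ into intervals $U_i$ small enough that $\tilde\cD^{U_i}-\tilde\cD_0$ has norm at most $1/(4C)$ as a map $H(\tilde\cD_0)\to L^2$ (Assumption~\ref{ass}); continuity of $u\mapsto D_u$ is exactly what makes such a subdivision possible. On each piece a Neumann series then furnishes $(\tilde\cD^{U_i}-\lambda)^{-1}\in M_2(\cM)$. Gluing these local resolvents with a partition of unity and taking $|\lambda|$ large makes the commutator error $\sum_i\chi_i'(\tilde\cD^{U_i}-\lambda)^{-1}\chi_i$ small in norm, producing a genuine inverse of $\tilde\cD-\lambda$ in $M_2(\cM)$; this yields selfadjointness, the domain identity, and affiliation simultaneously. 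The same local resolvents $(\tilde\cD^{U_i}-i)^{-1}$ then serve as your missing interior parametrix $Q_0$, with the error terms landing in $K(M_2(\cM))$ via Lemma~\ref{compinv}.
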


\begin{proof}
Using Fourier transform as in the proof of the previous lemma one sees that $\tilde \cD_x \pm i$ is invertible. The inverse is in $M_2(\cM)$. Thus $\tilde \cD_x$ is selfadjoint and affiliated to $M_2(\cM)$.

By Fourier transform, one also checks that $\tilde D_x (\tilde\cD_x+i)^{-1}$ is bounded. Since 
$$\tilde D_y(\tilde\cD_x+i)^{-1}=(\tilde D_y\tilde D_x^{-1})\tilde D_x(\tilde\cD_x+i)^{-1}$$ 
is bounded, it follows that $(\tilde\cD_y-\tilde\cD_x)(\tilde\cD_x+i)^{-1}$ is bounded, and hence $(\tilde\cD_y+i)(\tilde\cD_x+i)^{-1}$ is bounded for all $x,y \in [0,1]$. This implies that $\dom \tilde\cD_x=\dom \tilde\cD_0$. Note also that $\tilde D_y(\tilde\cD_x+i)^{-1}$ depends continuously on $y$. 

Similarly it follows that $\tilde \cD(\tilde \cD_0+i)^{-1}$ is bounded. Thus $\dom \tilde \cD_0 \subset \dom \tilde \cD$. 

In the following we show that $\tilde \cD+i:H(\tilde \cD_0) \to L^2(\R,H\oplus H)$ has a bounded inverse, which is in $M_2(\cM)$. Then it follows that $\tilde \cD$ is selfadjoint affiliated to $\cM$ with $\dom \tilde \cD = \dom \tilde \cD_0$.

First we make the following assumption on the path $(D_u)_{u \in [0,1]}$: 

\begin{ass}
\label{ass} 
Let $C$ be the norm of $(\tilde \cD_0 - i)^{-1}$ as an operator from $L^2(\R,H\oplus H)$ to $H(\cD_0)$. 
We assume that the operator
$\tilde \cD-\tilde \cD_0:H(\tilde \cD_0) \to L^2(\R,H\oplus H)$ is bounded by $\frac{1}{4C}$.
\end{ass}

Under Assumption \ref{ass} the Neumann series
$$(\tilde \cD - i)^{-1}=(\tilde \cD_0 - i)^{-1}\sum_{n=0}^{\infty}\bigr((\tilde \cD_0- \tilde \cD)(\tilde \cD_0 - i)^{-1}\bigl)^n$$
converges and defines an element in $M_2(\cM)$. Furthermore $\cD_0(\tilde \cD - i)^{-1}$ is well-defined and bounded. Thus $\dom \tilde \cD\subset \dom \tilde \cD_0$. It follows that $\tilde \cD$ is selfadjoint and affiliated to $M_2(\cM)$.

Now we drop Assumption \ref{ass}. For an open interval $U$ we define $\tilde \cD^U, \cD^U_x, \tilde \cD^{U}_x$ as above.

Let $(U_i)_{i=0, \dots, k+1},~k \in \bbbn,$ be a finite covering of $\R$ by open intervals with $(-\infty,0)= U_0$, $(1,\infty) = U_{k+1}$ and with $U_i, ~i=1, \dots, k$ precompact. We assume that the sets $U_i, i=1, \dots, k$ are small enough such that for each $i$ the path $D^{U_i}_u$ fulfills Assumption \ref{ass}. By the compactness of $[0,1]$ such a covering exists. By the previous argument, the operator $\tilde \cD^{U_i}$ with $\dom \tilde \cD^{U_i}=\dom \tilde \cD_0$ is selfadjoint and affiliated to $M_2(\cM)$.

Let $(\chi_i^2)_{i=0,\dots, k+1}$ be a partition of unity subordinate to the covering with $\chi_i \in \Cinf(\R)$.

For $\lambda \in i\R \setminus \{0\}$
we set
$$Q(\lambda)=\sum_{i=0}^{k+1}\chi_i(\tilde \cD^{U_i} - \lambda)^{-1}\chi_i \ .$$

It holds that
$$K :=(\tilde \cD -\lambda)Q(\lambda)-1=\sum_{i=0}^{k+1}\chi_i'(\tilde \cD^{U_i}-\lambda)^{-1}\chi_i  \ .$$
By choosing $|\lambda|$ large enough we ensure that $1+K$ is invertible and thus $Q(\lambda)(1+K)^{-1}$ is a right inverse of $\tilde \cD-\lambda$. Note that $K, Q(\lambda)\in M_2(\cM)$. Similarly one constructs a left inverse. It follows that $\tilde \cD$ with $\dom \tilde \cD=\dom \tilde \cD_0$ is selfadjoint and affiliated to $M_2(\cM)$. 

Now we show that $\tilde \cD$ is Breuer-Fredholm. 

Define
$$Q= \chi_0\tilde \cD_0^{-1}\chi_0 + \chi_{k+1} \tilde \cD_1^{-1}\chi_{k+1}+ \sum_{i=1}^{k}\chi_i(\tilde \cD^{U_i} - i)^{-1}\chi_i \ .$$
It holds that
\begin{align*}
\tilde \cD Q-1&=\chi_0'\tilde \cD_0^{-1}\chi_0 + \chi_{k+1}'\tilde \cD_1^{-1}\chi_{k+1} + i\sum_{i=1}^{k}\chi_i(\tilde \cD^{U_i}-i)^{-1}\chi_i\\
&\quad +\sum_{i=1}^{k}\chi_i'(\tilde \cD^{U_i}-i)^{-1}\chi_i  \ .
\end{align*}
Using Lemma \ref{compinv} one checks that the right hand side is in $M_2(K(\cM))$. Thus $Q$ is a right parametrix of $\tilde \cD$. A similar calculation yields that $Q$ is also a left parametrix of $\tilde \cD$.
\end{proof}

The method of the previous lemma works also in a $C^*$-algebraic context and allows to generalize Theorem 10 in \cite{LP} as well as Lemma 3.13 and Prop. 3.15 in \cite{wans}: the path $(D_u)_{u \in [0,1]}$ considered in \cite{wans} was a path of regular selfadjoint operators with common domain and compact resolvents on the standard Hilbert $\cA$-module $H_{\cA}$, where $\cA$ is a unital $C^*$-algebra. It was assumed that $D_u-D_0$ is bounded. This may now be replaced with the condition that $D_u:H(D_0) \to H_{\cA}$ depends continuously on $u$. See the remarks following Lemma 3.13 and Prop. 3.15 in \cite{wans}. A similar statement holds for the odd case, see \cite[\S 8]{wans}.

Next we prove the additivity property for the index.
\begin{lem}
\label{cut}
Let $y \in (0,1)$ be such that $D_y$ be invertible. We define $U_0=(-\infty,y), U_1=(y,\infty)$.  Then
$$\ind(\ra_u+D_u)=\ind(\ra_u+ D^{U_0}_u) + \ind(\ra_u + D^{U_1}_u) \ .$$
\end{lem}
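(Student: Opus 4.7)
My plan is to reduce the identity to the equality
$$\ind(\cD^{U_0} \oplus \cD^{U_1}) = \ind(\cD \oplus (\ra_u + D_y)),$$
a statement that I would prove via a rotation-type homotopy on $L^2(\R, H \oplus H)$. First I would observe that $\ra_u + D_y$ is invertible on $L^2(\R, H)$: Fourier transform in $u$ turns it into multiplication by $it + D_y$, and $\|(it + D_y)^{-1}\| \le \|D_y^{-1}\|$ is bounded uniformly in $t$ since $D_y$ is invertible and selfadjoint. Hence $\ind(\ra_u + D_y) = 0$, and by direct-sum additivity of the Breuer-Fredholm index the displayed equality yields the lemma.

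To prove the displayed equality I would pick $\theta \in C^\infty(\R)$ nondecreasing with $\theta \equiv 0$ on $(-\infty, y - \ep]$ and $\theta \equiv \pi/2$ on $[y + \ep, \infty)$ for some small $\ep > 0$ with $y \pm \ep \in (0,1)$, and form the $u$-dependent unitary
$$R(u) = \begin{pmatrix} \cos\theta(u) & -\sin\theta(u) \\ \sin\theta(u) & \cos\theta(u) \end{pmatrix} \ten \id_H \in B(H \oplus H).$$
Multiplication by $R(u)$ is a unitary $\cR$ on $L^2(\R, H \oplus H)$, so conjugation by $\cR$ preserves the index. Outside $[y - \ep, y + \ep]$ the rotation is locally constant: for $u \le y - \ep$ one has $R = I$ and $(D^{U_0}_u, D^{U_1}_u) = (D_u, D_y)$, while for $u \ge y + \ep$ the rotation by $\pi/2$ swaps the diagonal entries of $(D^{U_0}_u, D^{U_1}_u) = (D_y, D_u)$. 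Thus
$$\cR(\cD^{U_0} \oplus \cD^{U_1})\cR^* = \cD \oplus (\ra_u + D_y) + T(u),$$
where the selfadjoint perturbation $T(u)$ is supported in $[y - \ep, y + \ep]$, combining the bounded commutator term $\dot\theta(u) \bigl(\begin{smallmatrix} 0 & 1 \\ -1 & 0 \end{smallmatrix}\bigr) \ten \id_H$ coming from $\cR \ra_u \cR^* - \ra_u$ and a term in which $D_u - D_y : H(D_0) \to H$ (bounded by hypothesis) is multiplied by a bounded $\theta$-dependent matrix.

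Finally I would consider the linear homotopy $A_s := \cD \oplus (\ra_u + D_y) + s T(u)$ for $s \in [0, 1]$. Each $A_s$ is selfadjoint with common domain and depends continuously on $(u, s)$ as an operator from the graph-norm space to $L^2(\R, H \oplus H)$; the values $A_s|_{u = \pm\infty}$ are untouched by $T$, hence invertible and independent of $s$. The parametrix construction of Lemma \ref{afffred} extends uniformly in $s$ to this $2 \times 2$ system: the extra error terms introduced by $s T(u)$ are products of a compactly supported function of $u$ with a resolvent-type expression and therefore lie in $K(M_2(\cM))$ by Lemma \ref{compinv}. Homotopy invariance of the Breuer-Fredholm index then gives
$$\ind(\cD^{U_0}) + \ind(\cD^{U_1}) = \ind\bigl(\cR(\cD^{U_0} \oplus \cD^{U_1})\cR^*\bigr) = \ind(A_1) = \ind(A_0) = \ind(\cD),$$
which proves the lemma. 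The hard part will be the uniform-in-$s$ verification of Breuer-Fredholmness along the homotopy; this reduces to applying Lemma \ref{compinv} to the new parametrix error terms contributed by $T(u)$ and tracking the covering-and-cutoff construction of Lemma \ref{afffred} through the $2 \times 2$ setup.
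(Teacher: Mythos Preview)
Your rotation-and-homotopy approach works; two slips to fix. First, $T(u)$ is not selfadjoint (the $\dot\theta(u) J$ term is skew), and $A_s$ is of the form $\partial_u + (\text{something})$, hence not selfadjoint either---but the argument never actually needs this, only that each $A_s$ is Breuer--Fredholm and that the family is Riesz-continuous in $s$. Second, you can eliminate the unbounded part $W(u)$ of $T$ entirely by first using homotopy invariance of the index to arrange that $D_u$ is constant on $(y-\delta,y+\delta)$; then $T = \dot\theta(u)J$ is bounded and your parametrix verification becomes routine.

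The paper takes a related but distinct route, adapted from Bunke's $K$-theoretic relative index theorem: it forms the four-fold direct sum $\fD = \cD^{U_0}\oplus\cD^{U_1}\oplus\cD^*\oplus\cD_y^*$, constructs a unitary $X\in M_4(\cM)$ from cutoffs $\chi_1,\chi_2$ with $\chi_1^2+\chi_2^2=1$ (the analogue of your $\cos\theta,\sin\theta$), shows that $[\ov X,\tilde{\fD}]$ is bounded and that $[\ov X,\chi(\tilde\fD)] \in K(M_8(\cM))$, and then homotopes $\chi(\fD)$ to the invertible $X$ through the bounded Breuer--Fredholm family $F_x=\cos x\,\chi(\fD)+\sin x\,X$. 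Your two-summand rotation is conceptually more direct, but the paper's detour through the bounded transform $\chi(\fD)$ sidesteps all domain and unbounded-homotopy issues.
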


\begin{proof}
The proof is an adaption of the proof of Lemma 3.13(5) in \cite{wans} to the present context. That proof in turn is a variation of the proof of the $K$-theoretic relative index theorem (Theorem 1.14 in \cite{bu}).

Define $$\fD=\cD^{U_0} \oplus \cD^{U_1} \oplus \cD^* \oplus \cD_y^* \ .$$ The operator $\fD$ is a Breuer Fredholm operator affiliated to the semifinite von Neumann algebra $M_4(\cM)$. We have to show that its index vanishes. 

By the homotopy invariance of the index we can assume that the path $D_u$ (and thus also $D^{U_0}_u$, $D^{U_1}_u$) is constant on $(y-\delta, y + \delta)$ for some $\delta >0$.

Let $\chi_1:\R \to [0,1]$ be a smooth function such that $\chi_1(x)=1$ for $x< y-\delta/2$ and $\chi_1(x)=0$ for $x>y+\delta/2$ and set $\chi_2=\sqrt{1-\chi_1^2}$. Define
$$X=\left(\begin{array}{cccc} 0 & 0 & -\chi_1 & -\chi_2 \\
 0 & 0 & -\chi_2 & \chi_1 \\
 \chi_1 & \chi_2 & 0 & 0 \\
 \chi_2 & -\chi_1 & 0 & 0 
 \end{array}\right) \in M_4(\cM) \ .$$   
Then $XX^*=X^*X=1$ and $X^*=-X$. Furthermore $X\fD - \fD^* X$ equals
$$\left(\begin{array}{cccc} 0 & 0 & -\chi_1 \cD^* + (\cD^{U_0})^* \chi_1 & -\chi_2 \cD_y^* + (\cD^{U_0})^* \chi_2 \\
0 & 0 & -\chi_2 \cD^* +(\cD^{U_1})^*\chi_2 & \chi_1\cD_0^* - (\cD^{U_1})^* \chi_1 \\  
\chi_1\cD^{U_0} - \cD\chi_1 & \chi_2 \cD^{U_1} - \cD \chi_2 & 0 & 0\\
\chi_2 \cD^{U_0} - \cD_y\chi_2 & -\chi_1 \cD^{U_1} +\chi_1 \cD_y & 0 & 0
\end{array}\right) \ .$$
One checks easily that this is a bounded operator, for example 
$$-\chi_1 \cD^*+(\cD^{U_0})^* \chi_1=[\chi_1,\ra_u] -\chi_1 D_u + D^{U_0}_u\chi_1=-\chi_1' \ .$$
Similarly one gets that $\fD X- X\fD^*$ is bounded. Define the operators $\tilde \fD:=\left(\begin{array}{cc} 0 & \fD^* \\ \fD & 0 \end{array}\right)$ and $ \ov X:=\left(\begin{array}{cc} 0 & X \\ X & 0 \end{array}\right)$. It follows that $[\ov X,\tilde \fD] \in M_8(\cM)$. Furthermore $[\ov X,\tilde \fD](\pm \ra_u+ D_0)^{-1} \in K(M_8(\cM))$.

Let $\chi \in \Cinf(\R)$ be odd, non-decreasing, with $\chi'(0)>0$ and $\chi^2-1 \in \Cinf_c(\R)$ and such that $\chi(\tilde \fD)^2-1 \in K(M_8(\cM))$. As in the proof of Definition 2.4 of \cite{wans} it follows that $$[\ov X,\chi(\tilde \fD)] \in K(M_8(\cM)) \ .$$ The operator $\chi(\fD)$ is implicitly defined by the equality $$\chi(\tilde \fD)=\left(\begin{array}{cc} 0 & \chi(\fD)^* \\ \chi(\fD) & 0 \end{array}\right) \ .$$
Then $X\chi(\fD)-\chi(\fD)^*X \in K(M_4(\cM))$ and $\chi(\fD)X-X\chi(\fD)^* \in K(M_4(\cM))$. 

Now we proceed as in the proof of Lemma 1.15 in \cite{bu}. For $x \in [0,\pi/2]$ we set
$$F_x:=\cos(x)\chi(\fD) + \sin(x) X \ .$$ 
It holds that
\begin{align*}
\lefteqn{F_xF_x^*-1}\\
&=\cos(x)^2\chi(\fD)\chi(\fD)^* +\sin(x)^2 + \cos(x)\sin(x)(\chi(\fD)X - X\chi(\fD)^*)-1 \\
&=\cos(x)^2(\chi(\fD)\chi(\fD)^* -1) + \cos(x)\sin(x)(\chi(\fD)X - X\chi(\fD)^*)\\ 
&\in K(M_4(\cM)) \ ,
\end{align*} 
and similarly $F_x^*F_x - 1 \in K(M_4(\cM))$. Thus $F_x$ is Breuer-Fredholm for any $x$. From the homotopy invariance of the index it follows that $$\ind(\chi(\fD))=\ind(F_0)=\ind(F_{\pi/2})=\ind(X)=0 \ .$$
\end{proof}

Let $D$ be a selfadjoint operator on $H$. Recall that a symmetric operator $K$ on $H$ with $\dom D\subset \dom K$ is called relatively bounded with respect to $D$ with (not unique) bound $\alpha_K$ if there is $c>0$ such that for all $f \in H$
$$\|Kf\| \le \alpha_K\|Df\| + c\|f\| \ .$$

We refer to \cite[\S 1.4]{da} for the theory of relatively bounded perturbations.

\begin{lem}\label{D+Kestimate}
Let $D$ be a selfadjoint operator on $H$. Let $K$ be a symmetric operator on $H$ with $\dom D \subset \dom K$ which is relatively bounded with respect to $D$ with bound $\alpha_K<1$. 

Then $D+K$ is selfadjoint.

Furthermore for any $0<\delta<1$ and $c>0$ there are $\lambda_0, C>0$ such that 
$$\|D(D + K + \lambda i)^{-1}\| \le C$$ 
for all $\lambda> \lambda_0$ and all $K$ as before which fulfill 
$$\|Kf\| \le \delta\|Df\| + c\|f\|, f\in H \ .$$ 

If $D$ is affiliated to $\cN$ and $K(D+i)^{-1} \in \cN$, then $D+K$ is affiliated to $\cN$.  
\end{lem}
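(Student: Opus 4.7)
The selfadjointness of $D+K$ (with $\dom(D+K)=\dom D$) is the classical Kato--Rellich theorem, which is exactly the content of the references in \cite[\S 1.4]{da}; I would just quote it. The substantive content of the lemma is the uniform resolvent estimate and the affiliation statement, for which I would proceed as follows.

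For the estimate, the plan is to factor
$$D+K+\lambda i=\bigl(1+K(D+\lambda i)^{-1}\bigr)(D+\lambda i)$$
on $\dom D$, and thus on $\dom(D+K)$. If the first factor is invertible with bounded inverse, this yields the formula
$$D(D+K+\lambda i)^{-1}=D(D+\lambda i)^{-1}\bigl(1+K(D+\lambda i)^{-1}\bigr)^{-1}.$$
The functional calculus for the selfadjoint operator $D$ gives $\|D(D+\lambda i)^{-1}\|\le 1$ and $\|(D+\lambda i)^{-1}\|\le 1/\lambda$ for every $\lambda>0$. Applying the hypothesis $\|Kf\|\le \delta\|Df\|+c\|f\|$ to $f=(D+\lambda i)^{-1}g$ yields
$$\bigl\|K(D+\lambda i)^{-1}g\bigr\|\le \delta\|D(D+\lambda i)^{-1}g\|+c\|(D+\lambda i)^{-1}g\|\le \bigl(\delta+\tfrac{c}{\lambda}\bigr)\|g\|,$$
a bound that depends only on $\delta,c,\lambda$ and not on the particular $K$. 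Choose $\lambda_{0}:=2c/(1-\delta)$; then $\delta+c/\lambda<\tfrac{1+\delta}{2}<1$ for all $\lambda>\lambda_{0}$, so $1+K(D+\lambda i)^{-1}$ is invertible by Neumann series with norm of the inverse bounded by $2/(1-\delta)$. Combining, $\|D(D+K+\lambda i)^{-1}\|\le 2/(1-\delta)=:C$, uniformly in $K$ and in $\lambda>\lambda_{0}$.

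For the affiliation assertion, assume $D$ is affiliated to $\cN$ and $K(D+i)^{-1}\in\cN$. Then $(D+\lambda i)^{-1}\in\cN$ by the Borel functional calculus, and the bounded operator $(D+i)(D+\lambda i)^{-1}$ also lies in $\cN$, so
$$K(D+\lambda i)^{-1}=K(D+i)^{-1}\,(D+i)(D+\lambda i)^{-1}\in\cN.$$
The Neumann series for $\bigl(1+K(D+\lambda i)^{-1}\bigr)^{-1}$ (which converges in norm by the previous paragraph for $\lambda>\lambda_{0}$) therefore stays in $\cN$, and the factorization above shows $(D+K+\lambda i)^{-1}\in\cN$. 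Since $\cN$ is closed under the Borel calculus and contains a resolvent of $D+K$, the selfadjoint operator $D+K$ is affiliated to $\cN$.

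I do not anticipate a serious obstacle: the whole argument rests on the one computational trick of applying the relative bound on the range of $(D+\lambda i)^{-1}$, and on keeping track that every estimate depends on $K$ only through $\delta$ and $c$. The only minor subtlety is to verify that $\dom(D+K+\lambda i)=\dom D$ so that the factorization and the composition $D(D+K+\lambda i)^{-1}$ are defined on all of $H$; this is immediate from Kato--Rellich.
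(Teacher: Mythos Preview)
Your proof is correct and follows essentially the same route as the paper: both establish $\|K(D+\lambda i)^{-1}\|<1$ for large $\lambda$ (the paper by citing the proof of \cite[Lemma 1.4.1]{da}, you by computing the bound $\delta+c/\lambda$ directly), invert $1+K(D+\lambda i)^{-1}$ via the Neumann series, and read off from the resulting resolvent formula both the uniform bound on $D(D+K+\lambda i)^{-1}$ and the affiliation to $\cN$. Your version is a bit more explicit about the constants $\lambda_0$ and $C$, but the argument is the same.
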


\begin{proof}
For any $0<\delta<1$ and $c>0$ there is $\lambda_0>0$ such that for $\delta <\gamma <1$ it holds that
$$\|K(D + \lambda i)^{-1}\| <\gamma$$ for all $\lambda>\lambda_0$ and for all symmetric $K$ with $\dom D \subset \dom K$ which fulfill $\|Kf\| \le \delta \|Df\| + c\|f\|, f\in H$. This follows from the proof of Lemma 1.4.1 in \cite{da}. 

Thus for $\lambda>\lambda_0$ the resolvents $(D+ K + \lambda i)^{-1}$ are in $\cN$ by the Neumann series
$$(D+ K + \lambda i)^{-1}=(D+ \lambda i)^{-1}\sum_{n=0}^{\infty}(-K(D + \lambda i)^{-1})^n \ .$$
This implies that $D+K$ is selfadjoint and affiliated to $\cN$ if $D$ is affiliated to $\cN$ and $K(D+i)^{-1} \in \cN$. Furthermore we get
$$\|D(D+K + \lambda i)^{-1}\| \le C\sum_{n=0}^{\infty}\| K(D + \lambda i)^{-1}\|^n<C\sum_{n=0}^{\infty} \gamma^n \ .$$
\end{proof}

\begin{lem}
\label{inverse}
Let $D$ be a selfadjoint operator on $H$. Let $K$ be a symmetric operator on $H$ with $\dom D \subset \dom K$ which is relatively bounded with respect to $D$ with bound $\alpha_K<1$. Let $D+K$ be invertible.
For $R>0$ we set $P_R:=1_{[-R,R]}(D)$. Then for $R$ large the operator 
$$D+(1-u)K + uP_RKP_R$$ 
is invertible for any $u \in [0,1]$.
\end{lem}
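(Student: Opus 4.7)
My plan is to apply Lemma \ref{D+Kestimate} to get selfadjointness and then to establish invertibility via a Schur complement argument with respect to the decomposition $H = P_R H \oplus (1-P_R) H$. Since $K_u := (1-u)K + u P_R K P_R$ is $D$-relatively bounded with bound $(1-u)\alpha_K < 1$ (the bounded operator $P_R K P_R$ is absorbed into the constant), Lemma \ref{D+Kestimate} gives that $T_u := D + K_u$ is selfadjoint on $\dom D$. Because $D$ commutes with $P_R$, it splits as $D = D_R \oplus D^R$ in block form with $|D^R|\ge R$, and $T_u$ acquires the structure
\[
T_u = \begin{pmatrix} D_R + K_{11} & (1-u) K_{12} \\ (1-u) K_{21} & D^R + (1-u) K_{22} \end{pmatrix},
\]
with $K_{ij}$ the natural blocks of $K$ and $K_{11} = P_R K P_R$ bounded.

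The first step is to show that the $(2,2)$-block $E_u := D^R + (1-u) K_{22}$ on $(1-P_R)H$ is invertible for $R$ large, with $\|E_u^{-1}\| = O(1/R)$ uniformly in $u$. Indeed, $|D^R|\ge R$ upgrades the $D$-relative bound of $K_{22}$ to a $D^R$-relative bound of at most $\alpha_K + c/R$, which is strictly less than $1$ for large $R$; a Neumann-series argument analogous to that in Lemma \ref{D+Kestimate} then yields the claimed bound. Invertibility of $T_u$ becomes equivalent to invertibility of the Schur complement
\[
S_u := (D_R + K_{11}) - (1-u)^2 K_{12} E_u^{-1} K_{21}
\]
on $H_R = P_R H$. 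At $u=0$, the hypothesis that $T_0 = D+K$ is invertible combined with the Schur formula gives $S_0^{-1} = P_R (D+K)^{-1} P_R|_{H_R}$, so $\|S_0^{-1}\|\le \|(D+K)^{-1}\|$ uniformly in $R$.

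To conclude, I would estimate $S_u - S_0$ via the resolvent identity $E_u^{-1} - E_0^{-1} = u\, E_u^{-1} K_{22} E_0^{-1}$, which rewrites the difference as
\[
S_u - S_0 = u\, K_{12} E_u^{-1}\bigl[(2-u) - K_{22} E_0^{-1}\bigr] K_{21},
\]
and show that $\|S_u - S_0\| \to 0$ as $R\to \infty$, uniformly in $u \in [0,1]$. A Neumann-series argument in $B(H_R)$ then yields the invertibility of $S_u$, and hence of $T_u$, for $R$ large. The main obstacle is precisely this last estimate: the off-diagonal blocks $K_{12}$ and $K_{21}$ have no a priori operator-norm bound independent of $R$ (they scale like $\alpha_K R$), so the required smallness of $\|S_u - S_0\|$ cannot come from separate norm bounds on each factor but must be extracted from the cancellation encoded in the identity above, with the $O(1/R)$ factor coming from $\|E_u^{-1}\|$ absorbing the $O(R)$ growth of the surrounding operators.
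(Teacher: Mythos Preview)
Your preliminary steps are fine: selfadjointness of $T_u$, invertibility of the $(2,2)$-block with $\|E_u^{-1}\|=O(1/R)$, and the identification $S_0^{-1}=P_R(D+K)^{-1}P_R$ all go through. The gap is exactly where you locate it, and the heuristic you offer does not close it. In
\[
S_u - S_0 \;=\; u\, K_{12}\, E_u^{-1}\bigl[(2-u) - K_{22} E_0^{-1}\bigr]\, K_{21}
\]
the single $O(1/R)$ factor from $E_u^{-1}$ is already consumed in making $K_{12}E_u^{-1}$ bounded: $K_{12}$ is controlled only through the $D^R$-graph norm, and since $\|D^R E_u^{-1}\|=O(1)$ one obtains merely $\|K_{12}E_u^{-1}\|=O(1)$, with no decay left over. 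The bracket is $O(1)$ (indeed $K_{22}E_0^{-1}=I-D^R E_0^{-1}$ is bounded but not small), and $\|K_{21}\|\le \alpha_K R+c$. So the straightforward estimate gives $\|S_u-S_0\|=O(R)$, and there is no evident cancellation that improves this. Structurally, the Schur complement lives on $P_R H$, where $D$ is bounded; the only source of smallness in the problem, namely $(1-P_R)(D+\lambda i)^{-1}\to 0$, has been eliminated by the reduction.

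The paper's proof keeps this decay in play by working on all of $H$ with resolvents. Writing $E_u$ for your $T_u$ (beware the clash of notation), it fixes a large real $\lambda$ and shows $\|(E_0+\lambda i)^{-1}-(E_u+\lambda i)^{-1}\|\to 0$ uniformly in $u$ as $R\to\infty$. The resolvent identity gives
\[
(E_0+\lambda i)^{-1}-(E_u+\lambda i)^{-1}=-u\,(E_0+\lambda i)^{-1}\bigl[K-P_RKP_R\bigr](E_u+\lambda i)^{-1},
\]
and after sandwiching by $(D+\lambda i)^{\pm 1}$ (using the uniform bound on $D(E_u+\lambda i)^{-1}$ from Lemma~\ref{D+Kestimate}), each of the three pieces of $K-P_RKP_R$ acquires an explicit factor $(1-P_R)(D+\lambda i)^{-1}$, of norm at most $(R^2+\lambda^2)^{-1/2}$. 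Invertibility of all $E_u$ for large $R$ then follows from the openness of the invertible selfadjoint operators in the gap topology. The decisive decay lives on $(1-P_R)H$, which your Schur reduction discards.
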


\begin{proof}  
We set $E_u:=D+(1-u)K + uP_RKP_R$. Since $P_RKP_R$ is bounded, for each $u$ and $R$ the operator $(1-u)K + uP_RKP_R$ is relatively bounded with respect to $D$. Let $\alpha_K<1$ be a bound of $K$. Then there is $c>0$ such that for all $u \in [0,1], R>0$ and $f \in H$
\begin{align*}
\|((1-u)K + uP_RKP_R)f\| &\le (1-u)\|Kf\|+ u\|P_RKP_Rf\|\\
&\le (1-u)\|Kf\| + u\|K(P_Rf)\|\\
& \le (1-u)\alpha_K\|Df\| + c(1-u)\|f\| +u\alpha_K\|P_RDf\|  + cu\|P_Rf\| \\
&\le\alpha_K\|Df\|+c\|f\| \ .
\end{align*}
Thus by the previous lemma $D(E_u+\lambda i)^{-1}$ is uniformly bounded in $u$ and $R$ for $\lambda$ large.

It holds that 
\begin{align*}
\lefteqn{(E_0+\lambda i)^{-1} - (E_u+ \lambda i)^{-1}}\\
&=(E_0+ \lambda i)^{-1}(-uK+uP_RKP_R)(E_u+ \lambda i)^{-1}\\
&=-u(E_0+ \lambda i)^{-1}((1-P_R)K(1-P_R)+(1-P_R)KP_R+P_RK(1-P_R))(E_u+ \lambda i)^{-1} \ .
\end{align*}
We show that this term converges to zero for $R \to \infty$ uniformly in $u$.

Since $(D+\lambda i)(E_u+\lambda i)^{-1}$ is uniformly bounded in $u$ and $R$ and $(E_0+\lambda i)^{-1}(D+ \lambda i)$ is bounded, there is $C \in \R$ such that
\begin{align*}
\|(E_0+\lambda i)^{-1} - (E_u+ \lambda i)^{-1}\|
&\le C\bigl(\|(D+\lambda i)^{-1}(1-P_R)K(1-P_R)(D+\lambda i)^{-1}\| \\
& \quad +\|(D+\lambda i)^{-1}(1-P_R)KP_R(D+\lambda i)^{-1}\| \\
& \quad +\|(D+\lambda i)^{-1}P_RK(1-P_R)(D+\lambda i)^{-1}\| \bigr) \ .
\end{align*} 
Consider, for example, 
the term $\|(D+\lambda i)^{-1}P_RK(1-P_R)(D+\lambda i)^{-1}\|$. The operator $(D+\lambda i)^{-1}P_RK$ is the adjoint of $K(D-\lambda i)^{-1}P_R$, which is uniformly bounded in $R$. Clearly $(1-P_R)(D+\lambda i)^{-1}$ converges to zero for $R \to \infty$. The other terms can be treated similarly. Thus $(E_0+\lambda i)^{-1} - (E_u+ \lambda i)^{-1}$ converges to zero for $R \to \infty$ uniformly in $u$.

Since $E_0$ is invertible and the set of invertible selfadjoint operators is open in the gap topology by Prop. 1.7 in \cite{blp}, the operator $E_u$ is invertible for all $u$ if $R$ is large enough.
\end{proof}

\begin{lem}
Assume that $\cN$ is a von Neumann algebra endowed with a finite trace and let $B_0 , B_1 \in \cN$ be two involutions. Set $B_u:=(1-u)B_0 + uB_1$. Then
$$\spfl((B_u)_{u\in [0,1]})=\ind(\ra_u + B_u) \ .$$
\end{lem}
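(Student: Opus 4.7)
The plan is to compute both $\spfl((B_u))$ and $\ind(\ra_u + B_u)$ directly and to show that each coincides with the common value $\tau(P_1) - \tau(P_0) = \tfrac{1}{2}\tau(B_1 - B_0)$, where $P_i := \tfrac{1}{2}(1+B_i)$ is the spectral projection onto the $+1$-eigenspace of the involution $B_i$. Applicability of the preceding framework is immediate: since $\cN$ carries a finite trace one has $K(\cN) = \cN$; the affine path $(B_u)$ consists of bounded selfadjoint elements of $\cN$, is norm-continuous with common domain $H$, and has invertible endpoints (as $B_i^2 = 1$). By Lemma \ref{afffred} this ensures that $\cD = \ra_u + B_u$ is Breuer-Fredholm affiliated to $\cM$ and that $\ind \cD$ is well-defined.

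For the spectral flow I would use Phillips' definition: pick a partition $0 = u_0 < \cdots < u_N = 1$ and, on each subinterval, a level $a_i > 0$ with $a_i \notin \spec(B_u)$ for $u \in [u_i, u_{i+1}]$, and express $\spfl((B_u))$ as a telescoping sum of differences of traces of the spectral projections $\chi_{[a_i,\infty)}(B_u)$ with a boundary adjustment. Letting the $a_i \to 0^+$ and using that the endpoints are involutions (so that the boundary contributions sharpen to $\tau(P_1)$ and $\tau(P_0)$), the sum collapses to $\tau(P_1) - \tau(P_0)$. For the index I would analyse $\ker \cD$ and $\ker \cD^*$ directly. Outside $[0,1]$ the extended path is the constant involution $B_0$ or $B_1$, so $L^2$-decay of a solution $v$ of $\dot v + B_u v = 0$ at $\pm \infty$ forces $v(0) \in (1-P_0)H$ and $v(1) \in P_1 H$. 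Denoting by $T$ the invertible propagator of the ODE on $[0,1]$, the kernel of $\cD$ is then identified with
$$\ker \cD \;\cong\; \{v \in (1-P_0)H : Tv \in P_1 H\},$$
and an analogous description applies to $\ker \cD^*$. A $\tau$-dimension computation, taking into account the exponential-tail contributions to the $L^2$-norm on $|u|\ge 1$ and the invertibility of $T$, then yields $\ind \cD = \tau(P_1) - \tau(P_0)$.

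The main technical obstacle is the careful treatment of both calculations at the interior points of $[0,1]$ where the affine path generically becomes non-invertible: for the spectral flow this requires Phillips-style regularization and explicit accounting of the jumps of $\chi_{[0,\infty)}(B_u)$ at crossings, while for the index it requires clean $\tau$-dimension bookkeeping for subspaces cut out by the non-unitary propagator $T$, exploiting its invertibility to identify $(1-P_0)H \cap T^{-1} P_1 H$ with a subspace of $P_1 H$ on which $\tau$ can be read off. As a sanity check, simple models—$H = \C$ with $B_0 = -1$, $B_1 = 1$ (where both sides equal $1$), or diagonal $2\times 2$ examples—confirm that each quantity produces the common value $\tau(P_1) - \tau(P_0)$.
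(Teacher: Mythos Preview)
Your approach is genuinely different from the paper's. The paper does \emph{not} compute either side directly for the affine path $B_u$. Instead it uses homotopy invariance (both $\spfl$ and $\ind$ depend only on the endpoints in the finite-trace setting) to replace $(B_u)$ by a concatenation $\beta_u$ passing through the identity at $u=\tfrac12$, then invokes the additivity of spectral flow under concatenation and the cut lemma (Lemma~\ref{cut}) for the index. Each half now diagonalizes with respect to $P_i^-=\tfrac12(1-B_i)$: the nontrivial piece is literally the scalar path $(-1+4u)$ on $P_i^- H$, for which both $\spfl$ and $\ind$ are read off by inspection as $\pm\tr(P_i^-)$. The point of the paper's argument is precisely to avoid any analysis of the propagator $T$ and any $\tau$-dimension computation for non-product subspaces of $L^2(\R,H)$.

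Your spectral flow computation is fine: in a finite von Neumann algebra every relative index of projections is a trace difference, and Phillips' sum telescopes to $\tau(P_1)-\tau(P_0)$. The index computation, however, has two real gaps that your sketch does not address. First, the identification $\ker\cD\cong (1-P_0)H\cap T^{-1}P_1H$ is a bounded linear bijection, not an isometry, and you must explain why the $\tau_{\cM}$-trace of the projection onto $\ker\cD\subset L^2(\R,H)$ equals the $\tau_{\cN}$-trace of the projection onto this intersection in $H$; this needs a Murray--von~Neumann equivalence argument (embed $H$ into $L^2(\R)\otimes H$ via a fixed unit vector in $L^2(\R)$, then use polar decomposition of the $\cN'$-equivariant solution map). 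Second, and more substantially, you still have to evaluate
\[
\tau\bigl((1-P_0)\wedge R\bigr)-\tau\bigl(P_0\wedge(1-R)\bigr),\qquad R=\text{proj onto }T^{-1}P_1H,
\]
and show it equals $\tau(P_1)-\tau(P_0)$; this requires the observation $(T^{-1}P_1H)^{\perp}=T^*(1-P_1)H$ together with Kaplansky's parallelogram identity. Neither step is visible in your outline, and your diagonal sanity checks do not probe them since there $T$ commutes with $P_0,P_1$. The paper's reduction sidesteps both issues entirely; if you want a direct computation, you should make these two ingredients explicit.
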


\begin{proof} 
The assertion has been proven already in \cite[\S 5]{bcp} by an explicit calculation. We give a different proof here. We use the properties of the spectral flow and the index to reduce to paths of a particularly simple form. The value of the spectral flow and the index for these elementary paths can be seen as the normalization property of the spectral flow and the index.  
 
First note that $K(\cN)=\cN$, and thus any element in $\cN$ is Breuer-Fredholm and has resolvents in $K(\cN)$. 

By homotopy invariance the spectral flow of a path in $\cN$ and the index only depend on the endpoints.
  
For $i=0,1$ set $P_i^-=\frac 12 (1-B_i)$.

Define the path $(\beta_u)_{u \in [0,1]}$ by $\beta_u=B_0+4u P^-_0$ for $u \in [0,\frac 12]$ and $\beta_u=B_1+4(1-u)P_1^-$ for $u \in [\frac 12, 1]$. Note that $\beta_{1/2}=1$. 

We set $U_0=(0,\frac 12)$ and $U_1=(\frac 12,1)$ and get paths $\beta_u^{U_i}$.

Now by the additivity of the spectral flow with respect to concatenation of paths
$$\spfl((\beta_u)_{u\in [0,1]})=\spfl((\beta^{U_0}_u)_{u\in [0,1]}) + \spfl((\beta^{U_1}_u)_{u\in [0,1]}) \ , $$
and by Lemma \ref{cut}
$$\ind(\ra_u + \beta_u)=\ind(\ra_u + \beta^{U_0}_u) + \ind(\ra_u + \beta^{U_1}_u) \ .$$
Note that $(1-P_i^-)\beta^{U_i}_u(1-P_i^-)=(1-P_i^-)$. Thus, this path does neither contribute to spectral flow nor to the index. Furthermore $P_0^-\beta^{U_0}_uP_0^-=(-1+4u)P_0^-$ for $u \in [0,\frac 12]$ and $P_1^-\beta^{U_1}_uP_1^-=(-1+4(1-u))P_1^-$ for $u \in [\frac 12,1]$. The spectral flow for these paths can be obtained directly from Phillips' definition \cite{Ph}, whereas the index can be easily calculated by determining explicitely kernel and cokernel. We get 
$$\spfl((\beta_u^{U_0})_{u\in [0,1]})=\ind(\ra_u + \beta_u^{U_0})=\tr(P_0^-)$$
and 
$$\spfl((\beta_u^{U_1})_{u\in [0,1]})=\ind(\ra_u + \beta_u^{U_1})=-\tr(P_1^-) \ .$$
\end{proof}

The following proposition is one of the main results of this paper. It generalizes a result from \cite{pu}. 

\begin{prop}\label{push}
Let $D$ be a selfadjoint invertible operator affiliated to $\cN$ and let $(K_u)_{u \in [0,1]}$ be a path of symmetric operators on $H$ with $\dom D \subset \dom K_u$, such that each $K_u$ is relatively bounded with respect to $D$ with bound $\alpha_u<1$ and such that $(K_uD^{-1})_{u \in [0,1]}$ is a continuous path in $K(\cN)$. Furthermore we assume that $K_0=0$ and that $D+K_1$ is invertible. 

Then 
$$\spfl((D+K_u)_{u \in [0,1]})=\ind(\ra_u + D + K_u) \ .$$
\end{prop}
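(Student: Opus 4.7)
The plan is to imitate the structure of the proof of Theorem \ref{spflind}, using Lemma \ref{inverse} to reduce the more general setting here to the case of bounded, ``compact'' perturbations of $D$, where the argument of Theorem \ref{spflind} can be re-run.

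First I would show that $\cD := \partial_u + D + K_u$ is Breuer-Fredholm affiliated to $\cM$, by adapting the parametrix construction of Lemma \ref{afffred}. The hypothesis $K_u D^{-1} \in K(\cN)$, combined with the identity $K_u (D + i)^{-1} = (K_u D^{-1}) D (D + i)^{-1}$, yields $K_u (D + i)^{-1} \in K(\cN)$, so Lemma \ref{compinv} gives that for every $\phi \in \Cinf_c(\R)$ the operator $\phi K_u (\pm \partial_u + D)^{-1}$ lies in $K(\cM)$. A partition-of-unity argument as in Lemma \ref{afffred} then produces a parametrix for $\cD$ built from $(\partial_u + D)^{-1}$ whose remainder sits in $K(\cM)$. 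The selfadjointness of each $D + K_u$ and its affiliation to $\cN$ are furnished by Lemma \ref{D+Kestimate}.

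Next I would reduce to a path of bounded, compact perturbations of $D$ by truncation. Set $P_R := 1_{[-R, R]}(D)$ and consider the two-parameter family
\[
F(s, u) := D + (1 - s) K_u + s\, P_R K_u P_R, \qquad (s, u) \in [0,1]^2.
\]
At $u = 0$ the hypothesis $K_0 = 0$ gives $F(s, 0) = D$, which is invertible. At $u = 1$, Lemma \ref{inverse} applied to $K_1$ yields some $R_0$ such that $F(s, 1)$ is invertible for every $s \in [0, 1]$ whenever $R \ge R_0$. Hence $s \mapsto \partial_u + F(s, \cdot)$ is a homotopy of Breuer-Fredholm operators with invertible endpoints, running from $\cD$ to $\partial_u + D + P_R K_u P_R$. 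By homotopy invariance of spectral flow and index, the problem is reduced to proving
\[
\spfl((D + P_R K_u P_R)_{u \in [0,1]}) = \ind(\partial_u + D + P_R K_u P_R),
\]
where the truncated perturbations $P_R K_u P_R = P_R (K_u D^{-1})(D P_R)$ are bounded and lie in $K(\cN)$ since $K(\cN)$ is a two-sided ideal.

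For this truncated path I would re-run the proof of Theorem \ref{spflind}. Although the resolvent of $D + P_R K_u P_R$ need not lie in $K(\cN)$, the ingredients actually used in that proof---homotopy invariance, additivity under concatenation (Lemma \ref{cut}), and the preceding normalization lemma on paths of involutions---remain available here. Concretely I would split off the trivial constant summand on $\ker(P_R)$ via $H = \mathrm{ran}(P_R) \oplus \ker(P_R)$ and work in the reduced algebra $\cN_R := P_R \cN P_R$, where $D|_{\mathrm{ran}(P_R)}$ is bounded and invertible and the perturbations live in $K(\cN_R)$; a bounded transform then brings the path into the form handled by the normalization lemma. The main obstacle I anticipate is precisely this last step: re-abstracting the argument of Theorem \ref{spflind} to a regime where the background operator need not have $K(\cN)$-compact resolvents, and verifying that each ingredient---in particular Lemma \ref{cut} applied after localization---carries through when $D^{-1}$ is used only through the combinations $K_u D^{-1}$ furnished by the hypothesis.
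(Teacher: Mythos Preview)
Your overall strategy---prove Breuer--Fredholmness, truncate via $P_R$ using Lemma~\ref{inverse}, split off the constant summand on $(1-P_R)H$, and then appeal to the lemma on involutions---is exactly the paper's. Two steps need sharpening, however. For the parametrix, the claim that the remainder ``sits in $K(\cM)$'' fails because $D^{-1}\notin K(\cN)$ here: the commutator terms $\chi_i'(\ra_u+D)^{-1}\chi_i$ coming from any partition of unity are \emph{not} compact in $\cM$. The paper's fix is a two-piece parametrix $Q=\chi_1(\ra_u+D)^{-1}\chi_1+\chi_2(\ra_u+D+K_1)^{-1}\chi_2$ (so that near $+\infty$ one matches the actual endpoint $D+K_1$, not $D$), with the remainder split as compact plus small: the terms $\chi_1 K_u(\ra_u+D)^{-1}\chi_1$ and $\chi_2(K_u-K_1)(\ra_u+D+K_1)^{-1}\chi_2$ have compactly supported $u$-dependence and hence lie in $K(\cM)$ by Lemma~\ref{compinv}, while the derivative terms $T:=\chi_1'(\ra_u+D)^{-1}\chi_1+\chi_2'(\ra_u+D+K_1)^{-1}\chi_2$ are merely made small in norm by choosing gentle cutoffs and absorbed via $(1+T)^{-1}$.

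For the bounded endgame, the normalization lemma on involutions requires a \emph{finite} trace, and $P_R\cN P_R$ need not be finite; a ``bounded transform'' alone does not bridge this. The paper supplies the missing reduction: with $D$ bounded, homotope to the straight line between the involutions $B_i=1_{[0,\infty)}(D+K_i)-1_{(-\infty,0]}(D+K_i)$, note that $B_1-B_0\in K(\cN)$, and localize to the finite projection $P_\varepsilon=1_{[0,\varepsilon]}((B_0+B_1)^2)$, which commutes with both $B_i$; on $(1-P_\varepsilon)H$ the path $(B_u)$ stays invertible, and on $P_\varepsilon H$ the involution lemma applies. (Also, beware of circularity: in the paper the proof of Theorem~\ref{spflind} \emph{uses} Proposition~\ref{push}, so ``re-running'' it is not available to you.)
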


In the classical case $\cN=B(H)$ the conditions of the Proposition simplify: the condition $K(D+i)^{-1} \in K(H)$ means that $K$ is relatively compact with respect to $D$, and this implies that any $\alpha_K>0$ is a bound.

\begin{proof} 
By Prop. 2.2 of \cite{Le} the bounded transform of the path $D+K_u$ depends continuously on $u$. Hence its spectral flow is well-defined.

As in the proof of Lemma \ref{afffred} one gets that $\pm \ra_u + D + K_u$ with domain $\dom(\ra_u + D)$ are affiliated to $\cM$ and adjoint to each other. 

We show that $\ra_u+D+K_u$ is Breuer-Fredholm. For that end, let $\chi_1 \in \Cinf(\R)$ be a positive function such that $\chi_1(x)=1$ for $x<-R$ and $\chi_1(x)=0$ for $x>R$ for some $R > 0$. We set $\chi_2=\sqrt{1-\chi_1^2}$. 

We define $Q=\chi_1 (\ra_u + D)^{-1} \chi_1+ \chi_2(\ra_u + D + K_1)^{-1}\chi_2$.

Then
\begin{align*}
(\ra_u + D + K_u) Q -1&=\chi_1 K_u(\ra_u + D)^{-1}\chi_1 + \chi_2(K_u-K_1)(\ra_u + D+K_1)^{-1}\chi_2\\
&\quad + \chi_1'(\ra_u + D)^{-1}\chi_1 + \chi_2'(\ra_u + D + K_1)^{-1}\chi_2\ .
\end{align*}

Using Lemma \ref{compinv} and an approximation argument one concludes that the first line of the right hand side is in $K(\cM)$. By choosing $\chi_1$ appropriate we can arrange that the norm of 
$$T:=\chi_1'(\ra_u + D)^{-1}\chi_1 + \chi_2'(\ra_u + D + K_1)^{-1}\chi_2$$ is smaller that $\frac 14$. Then $Q(1+T)^{-1}$ is a right parametrix of $\ra_u+D+K_u$. In a similar way one gets a left parametrix. This implies that $\ra_u + D+ K_u$ is Breuer-Fredholm. 

Next we show that it is enough to establish the equality in the proposition for the case where $D$ is bounded. 

Let $P_R=1_{[-R,R]}(D)$.

For $R >0$ large enough the selfadjoint operator $D+ sP_RK_1  P_R + (1-s)K_1$ is invertible for all $s \in [0,1]$ by Lemma \ref{inverse}. We write $\cE_s:=\ra_u + D + sP_R K_u P_R + (1-s)K_u$ and 
$$\tilde \cE_s=\left(\begin{array}{cc} 0 &\cE_s^* \\ \cE_s & 0 \end{array}\right) \ .$$ 
As usual, we write $\cD_0=\ra_t + D$, and define
$\tilde \cD_0$ accordingly. 

As in the proof of Lemma \ref{afffred} one shows that $\dom \tilde \cD_0 = \dom \tilde \cE_s$. Since $\tilde \cE_s:H(\tilde \cD_0) \to L^2(\R,H \oplus H)$ is continuous in $s$, Prop. 2.2 of \cite{Le} implies that the bounded transform of $\tilde \cE_s$ depends continuously on $s$. 
Thus the homotopy invariance of the index implies that 
$$\ind(\ra_u + D + K_u)=\ind(\cE_0)=\ind(\cE_1)=\ind(\ra_u + D + P_R K_u P_R) \ .$$
The operator on the right hand side commutes with $P_R$ and thus is diagonal with respect to the decomposition $L^2(\R,H)=L^2(\R,P_RH) \oplus L^2(\R,(1-P_R)H)$. The index of $\ra_u + (1-P_R)D$, taken with respect to the von Neumann algebra $(1-P_R)\cM(1-P_R)$, vanishes. 

Similarly, we have
$$\spfl((D+K_u)_{u \in [0,1]})=\spfl((D + P_R K_u P_R)_{u \in [0,1]}) \ .$$
Clearly, the spectral flow of the constant path $(1-P_R)D$ vanishes. Thus we only need to prove the assertion for the path $P_R(D + K_u)P_R$ of bounded operators.

Therefore we assume from now on that $D$ is bounded. Then $K_u$ is a continuous path in $K(\cN)$. Define the involution $B_i=1_{[0,\infty)}(D+K_i)-1_{(-\infty,0]}(D+K_i),~i=0,1$. The path $(D+K_u)_{u \in [0,1]}$ is homotopic to the path $(B_u:=B_0 + u(B_1-B_0))_{u \in [0,1]}$ through paths of selfadjoint Breuer-Fredholm operators with invertible endpoints. Furthermore $B_1-B_0 \in K(\cN)$. 

In the following we use ideas from \cite[\S 5]{bcp} in order to reduce to a finite situation, in which we can apply the previous lemma: the operator $B_u$ is invertible for $u \neq \frac 12$. Let $4>\ve>0$. The projection $P_{\ve}:= 1_{[0,\ve]}((B_0+B_1)^2)$ commutes with $B_0,B_1$. Thus $(1-P_{\ve})B_u(1-P_{\ve})$ is invertible for any $u$. It follows that the path $(1-P_{\ve})B_u(1-P_{\ve})$ is homotopic to the constant path $(1-P_{\ve})B_0(1-P_{\ve})$ through paths with invertible endpoints. Thus it neither contributes to the index nor to the spectral flow. Since $$(B_0+B_1)^2-4=B_0B_1+B_1B_0-2=(B_0-B_1)B_1+B_1(B_0-B_1) \in K(\cN) \ ,$$ 
the projection $P_{\ve}$ is finite. For the path $(P_{\ve}B_uP_{\ve})_{u \in [0,1]}$ the assertion follows from the previous lemma. 
\end{proof}

\begin{proof}[Proof of the Theorem] Let $0=u_0 < u_1 < \dots < u_k < u_{k+1}=1$ be such that for $i=0,\dots, k$ there is a selfadjoint operator $K_i \in K(\cN)$ with $D_u+K_i$ invertible for $u \in [u_i,u_{i+1}]$. We also assume that $K_0=0$ and set $K_{k+1}=0$. 

Such a subdivision exists: Let $s \in [0,1]$. For $K=2 1_{[-1,1]}(D_s)$ the operator $D_s + K:H(D_0) \to H$ is invertible. Since $D_u + K:H(D_0) \to H$ depends continuously on $u$, it is invertible in a small neighbourhood of $s$.
Now the existence of the subdivision follows from the compactness of $[0,1]$.
 
We define a path $(E_u)_{u \in [0,1]}$ as follows: We set
$$E_u=\left\{\begin{array}{ll}
D_{2u-u_i}+K_i, &u \in [u_i,\frac{u_i+u_{i+1}}{2}] \ ,\\ D_{u_{i+1}}+\frac{2}{u_{i+1}-u_i}\bigl((u_{i+1}-u)K_i+(u-\frac{u_i+u_{i+1}}{2})K_{i+1}\bigr) , & u \in [\frac{u_i+u_{i+1}}{2},u_{i+1}] \ .
\end{array}\right.$$

Since $E_u$ is obtained from $D_u$ by reparametrizing and adding a path of bounded selfadjoint operators with vanishing endpoints, homotopy invariance implies that
$$\spfl((D_u)_{u\in [0,1]})=\spfl((E_u)_{u \in [0,1]})$$ 
and
$$\ind(\ra_u + D_u)=\ind(\ra_u + E_u) \ .$$
Note that for each $i$ the path $E_u^{U_i}, U_i:=(u_i,\frac{u_{i}+u_{i+1}}{2})$ consists of invertible operators. Hence it is homotopic to the constant path $E_{u_i}$ through paths with invertible endpoints and contributes neither to the spectral flow nor to the index. The previous proposition implies that for the paths $E_u^{V_i}, V_i:=(\frac{u_{i}+u_{i+1}}{2},u_{i+1})$ index and spectral flow agree. 

Now the assertion follows from the additivity of spectral flow and index (see Lemma \ref{cut}) with respect to concatenation of paths.
\end{proof}

In the following we prove a variation of the theorem: instead of a cylinder we consider the unit interval and Atiyah-Patodi-Singer index conditions. The advantage is that we need not assume that the endpoints are invertible. 

We will deal with the von Neumann algebraic tensor product $\cM:=B(L^2(I))\ten\cN$ where $I \subset \R$ is an interval. For $I$ we will have $[0,1]$, $\R$, $[0,\infty)$ or $[-\infty,1)$. The interval will not be reflected in our notation, since it should be clear from the context which interval is meant.

As before, let $(D_u)_{u \in [0,1]}$ be a path of selfadjoint operators with common domain and resolvents in $K(\cN)$ and such that $D_u$ depends continuously on $u$ as a bounded operator from $H(D_0)$ to $H$. We do not assume that $D_0, D_1$ are invertible but we assume that $D_u$ is constant on $[0,\ve)$ and on $(1-\ve,1]$ for some $\ve>0$.

We set $P_u=1_{[0,\infty)}(D_u)$.

We define the unbounded operator $(\ra_u + D_u)^{APS}$ on $L^2([0,1],H)$ as the closure of $\ra_u + D_u$ with domain
$$\{f\in \Cinf([0,1],H(D_0))~|~ P_0f(0)=0, (1-P_1)f(1)=0\}$$
and, similarly, the operator $(-\ra_u + D_u)^{APS}$ as the closure of $-\ra_u + D_u$ with domain
$$\{f\in \Cinf([0,1],H(D_0))~|~ (1-P_0)f(0)=0, P_1f(1)=0\} \ .$$
The operator 
$$\tilde \cD^{APS}=\left(\begin{array}{cc} 0 & (-\ra_u + D_u)^{APS} \\
(\ra_u + D_u)^{APS} & 0 \end{array}\right)$$
is formally selfadjoint.

We also need the case of halfcylinders with Atiyah-Patodi-Singer boundary conditions:

We define the operator $(\ra_u + D_u)^{lAPS}$ as the closure of $\ra_u +D_u$ on $L^2([0,\infty),H)$ with domain
$$\{f \in \Cinf_c([0,\infty),H(D_0))~|~ P_0f(0)=0\} \ .$$ 
Here, as usual, we have set $D_u=D_1$ for $u\ge 1$.

Furthermore we let the operator $(-\ra_u + D_u)^{lAPS}$ be the closure of $-\ra_u +D_u$ with domain
$$\{f \in \Cinf_c([0,\infty),H(D_0))~|~ (1-P_0)f(0)=0\} \ .$$
This is a formal adjoint of $(\ra_u + D_u)^{lAPS}$.

Similarly, we define $(\ra_u + D_u)^{rAPS}$ as the closure of $\ra_u +D_u$ on $L^2((-\infty,1],H)$ with domain
$$\{f \in \Cinf_c((-\infty,1],H(D_0))~|~ (1-P_1)f(1)=0\}$$
and a formal adjoint $(-\ra_u + D_u)^{rAPS}$ as the closure of $-\ra_u +D_u$ on $L^2((-\infty,1],H)$ with domain
$$\{f \in \Cinf_c((-\infty,1],H(D_0))~|~ P_1f(1)=0\} \ .$$
 
\begin{prop}\label{APS Br-Fred}
The operator $\tilde \cD^{APS}$ is selfadjoint with resolvents in $K(\cM)$. In particular it is affiliated to $\cM$ and Breuer-Fredholm.
\end{prop}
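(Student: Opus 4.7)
Following Ramachandran \cite{Ram}, the plan is to construct a parametrix for $\tilde\cD^{APS}$ by gluing model inverses on the two halflines at the boundaries of $[0,1]$ with the full-line parametrix of Lemma~\ref{afffred} in the interior. Symmetry of $\tilde\cD^{APS}$ is immediate from integration by parts: the APS boundary conditions impose that $f$ and $g$ lie in orthogonal halves of the spectral decomposition at each endpoint, whence the boundary pairings $\langle f(0),g(0)\rangle$ and $\langle f(1),g(1)\rangle$ vanish. Once a two-sided parametrix mod $K(\cM)$ has been produced together with this symmetry, $\tilde\cD^{APS}$ is Breuer-Fredholm and symmetric, and a Neumann-series argument at $\pm \lambda i$ for large $\lambda$ upgrades the parametrix to genuine resolvents in $K(\cM)$, which simultaneously gives selfadjointness.

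The crucial preliminary step is to show that the halfline operators $\tilde\cD^{lAPS}$ and $\tilde\cD^{rAPS}$ (with the obvious matrix structure built from $(\pm\ra_u+D_u)^{lAPS}$ and $(\pm\ra_u+D_u)^{rAPS}$) have resolvents in $K(\cM)$. Since $D_u=D_0$ on $[0,\varepsilon)$, the model near the left boundary is the constant-coefficient operator on $L^2([0,\infty),H\oplus H)$ with APS boundary. Its resolvent at $-i$ admits an explicit integral kernel built from the functional calculus of $D_0$ with $P_0$ and $1-P_0$ encoding the boundary condition, producing off-diagonal exponential decay $e^{-|u-v||D_0|}$. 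Because $D_0$ has resolvents in $K(\cN)$, this kernel lies in $L^2(\R_{\ge 0}^2, K(\cN))$, so by a halfline analogue of Lemma~\ref{compinv} the model resolvent belongs to $K(\cM)$. The genuine $\tilde\cD^{lAPS}$ differs from the model only on $[\varepsilon,\infty)$, and the extra piece is absorbed by covering $[\varepsilon,\infty)$ by small open subintervals on which Assumption~\ref{ass} holds and repeating the partition-of-unity parametrix construction of Lemma~\ref{afffred}; the right end is treated analogously.

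For the final pasting on $[0,1]$, take a partition of unity $\chi_0^2+\sum_{i=1}^{k}\chi_i^2+\chi_{k+1}^2=1$ with $\chi_0$ supported in $[0,\varepsilon)$, $\chi_{k+1}$ in $(1-\varepsilon,1]$, and the $\chi_i$ supported in small $U_i\subset(0,1)$ satisfying Assumption~\ref{ass}, and set
\begin{equation*}
Q=\chi_0(\tilde\cD^{lAPS}-i)^{-1}\chi_0+\sum_{i=1}^{k}\chi_i(\tilde\cD^{U_i}-i)^{-1}\chi_i+\chi_{k+1}(\tilde\cD^{rAPS}-i)^{-1}\chi_{k+1}.
\end{equation*}
Computing $(\tilde\cD^{APS}-i)Q-1$ produces derivative-of-cutoff terms $\chi_j'\cdot(\mathrm{resolvent})$, which lie in $K(\cM)$ by Lemma~\ref{compinv} and its halfline analogue, together with matching terms of the form $(\tilde\cD^{APS}-\tilde\cD^{\bullet})(\tilde\cD^{\bullet}-i)^{-1}\chi_j$ that vanish because on $\supp\chi_j$ the operator $\tilde\cD^{APS}$ coincides with the local model $\tilde\cD^{\bullet}$. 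A left parametrix is built symmetrically. The main obstacle is the halfline analogue of Lemma~\ref{compinv} itself: verifying that the boundary resolvents sit in $K(\cM)$ rather than merely $\cM$ requires exhibiting the resolvent kernel as an $L^2(\R_{\ge 0}^2,K(\cN))$-element with uniform control, which is delicate when the spectrum of $D_0$ or $D_1$ accumulates at zero, and this is precisely what justifies the $\pm i$ shift in the resolvent.
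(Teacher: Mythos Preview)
Your proposal is essentially correct and follows the same partition-of-unity gluing strategy as the paper. The two treatments differ mainly in how the boundary models are handled, and the paper's route is simpler.

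The paper opens with a reduction you omit: it perturbs $D_u$ by a bounded selfadjoint term supported away from the boundary projections so that $D_0,D_1$ become \emph{invertible} while $P_0,P_1$ are unchanged. After this reduction the constant-coefficient halfline operators $(\pm\ra_u+D_0)^{lAPS}$ and $(\pm\ra_u+D_1)^{rAPS}$ are genuinely invertible, with the explicit Green kernel built from $e^{-(x-y)D_0}P_0$ and $e^{-(x-y)D_0}(1-P_0)$. Compactness of the localized inverse then follows directly from $e^{\mp xD_0}P_0^{\pm}\in K(\cN)$ for $x>0$, which is immediate from the resolvent hypothesis. This replaces your ``halfline analogue of Lemma~\ref{compinv}'' by a one-line observation and sidesteps entirely the delicacy you flag at the end: with invertible endpoints there is no accumulation of spectrum at zero to worry about. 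Your stated off-diagonal decay $e^{-|u-v||D_0|}$ is in fact the kernel of the \emph{inverse}, not of the resolvent at $\pm i$, and it fails to be in $L^2(\R_{\ge0}^2,K(\cN))$ precisely when $0\in\spec D_0$; working at $\pm i$ as you suggest would replace $|D_0|$ by $\sqrt{D_0^2+1}$, which does work, but the paper's reduction is cleaner.

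Two further simplifications relative to your outline: your intermediate step of first proving the full variable-coefficient halfline case is unnecessary, since $\chi_0$ is supported where $D_u\equiv D_0$ and the constant-coefficient model suffices; and the paper builds $Q(\lambda)$ with a free parameter $\lambda\in i\R$ from the outset, so that the commutator errors $\chi_i'(\tilde\cD^{\bullet}-\lambda)^{-1}\chi_i$ have norm $O(|\lambda|^{-1})$ and the Neumann series gives an honest two-sided inverse in $K(M_2(\cM))$ directly---this yields selfadjointness and compact resolvent in one stroke, rather than first producing a parametrix mod $K(\cM)$ and then upgrading.
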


\begin{proof}
Without loss of generality we may assume that the endpoints $D_0$, $D_1$ are invertible. This can be seen as follows: first note that it is enough to prove the assertion for a perturbation of $\tilde \cD^{APS}$ by a bounded selfadjoint element of $\cM$. Let $\phi \in \Cinf([0,1])$ be a positive function with $\supp \phi \in [0,\frac 34]$ and $\supp (1-\phi) \in [\frac 14, 1]$. Instead of $D_u$, we may consider the path 
$$\ov D_u:=D_u + \phi(u)(1_{[0,1]}(D_0)-1_{[-1,0)}(D_0)) + (1-\phi(u))(1_{[0,1]}(D_1)-1_{[-1,0)}(D_1)) \ ,$$ 
which has invertible endpoints. It holds that $1_{[0,\infty)}(\ov D_i)=P_i,~i=0,1$. Thus the Atiyah-Patodi-Singer boundary conditions defined using the path $(\ov D_u)_{u \in [0,1]}$ are the same as the ones using $(D_u)_{u \in [0,1]}$.  
 
We adapt the proof of Lemma \ref{afffred}, omitting some details. Let $(U_i)_{i=0,\dots, k+1}$ be a covering of $[0,1]$ by open intervals such that $U_i \subset (\frac{\ve}{2},1-\frac{\ve}{2}), i=1,\dots, k,$ for $\ve$ as above, and $U_0=[0,\ve)$, $U_{k+1}=(1-\ve, 1]$. Furthermore we assume that the path $D^{U_i}_u$ fulfills Assumption \ref{ass} for $i=1,\dots, k$.

We let $(\chi_i^2)_{i=0, \dots, k+1}$ be a partition of unity subordinate to the covering $(U_i)_{i=0, \dots, k+1}$ such that each $\chi_i$ is smooth. 

For $\lambda \in i\R\setminus \{0\}$ and $i\neq 0, k+1$ we define $$Q_i(\lambda):=(\tilde \cD^{U_i} - \lambda)^{-1} \ .$$ 
Parametrices near the endpoints are defined as follows: the operator $(\ra_u + D_0)^{lAPS}$ is invertible with inverse
$$(((\ra_u + D_0)^{lAPS})^{-1}f)(x)$$
$$=\int_0^{\infty} \bigl(-1_{[0,\infty)}(x-y)e^{-(x-y)D_0} P_0 + 1_{[0,\infty)}(y-x)e^{-(x-y)D_0}(1-P_0)\bigr) f(y)~dy \ .$$
(See Prop. 22.4 in \cite{BW} for a detailed discussion of this formula in the case of Dirac operators.) The inverse is in $\cM$.

A similar formula shows that the operator $(-\ra_u + D_0)^{lAPS}$ is invertible as well.
It follows that the operator 
$$\tilde \cD_0^{lAPS}:=\left(\begin{array}{cc} 0 & (- \ra_u + D_0)^{lAPS} \\ (\ra_u + D_0)^{lAPS} & 0 \end{array}\right)$$ is invertible with symmetric inverse in $M_2(\cM)$. In particular it is selfadjoint and affiliated to $M_2(\cM)$.
We set $$Q_0(\lambda)=(\tilde \cD_0^{lAPS} - \lambda)^{-1} \ .$$

Analogously, the operator
$$\tilde \cD_1^{rAPS}:=\left(\begin{array}{cc} 0 & (- \ra_u + D_1)^{rAPS} \\ (\ra_u + D_1)^{rAPS} & 0 \end{array}\right)$$ is selfadjoint, affiliated to $M_2(\cM)$ and invertible.
Set $$Q_{k+1}(\lambda)=(\tilde \cD_1^{rAPS} - \lambda)^{-1} \ .$$ 

Define $$Q(\lambda)=\sum_{i=0}^{k+1}\chi_i Q_i(\lambda)\chi_i \ .$$ 
To see that this operator is in $K(M_2(\cM))$ one uses the following analogue of Lemma \ref{compinv}: since the operators $e^{-xD_0} P_0$ and $e^{xD_0}(1-P_0)$ are in $K(\cN)$ for $x \neq 0$, it follows from the above formula for the inverse that $\phi((\ra_u + D_0)^{lAPS})^{-1}\in K(\cM)$ for $\phi \in \Cinf_c([0,\infty))$, and similarly $\phi((\ra_u + D_1)^{rAPS})^{-1}\in K(\cM)$ for $\phi \in \Cinf_c((-\infty,1])$.

Now one shows as in the proof of Lemma \ref{afffred} that for $|\lambda|$ large enough the operator
$$(\tilde \cD^{APS}-\lambda)Q(\lambda)=:1+K$$
is invertible. 

Thus $Q(\lambda)(1+K)^{-1}$ is a right inverse of $(\tilde \cD^{APS}-\lambda)$. Similarly one constructs a left inverse.

It follows that $(\tilde \cD^{APS}-\lambda)^{-1}=Q(\lambda)(1+K)^{-1} \in K(M_2(\cM))$. This implies the assertion.
\end{proof}

\begin{theorem}\label{spfl=ind APS}
Let $(D_u)_{u \in [0,1]}$ be a path of selfadjoint operators with common domain and with resolvents in $K(\cN)$. We assume that $D_u$ depends continuously on $u$ as a bounded operator from $H(D_0)$ to $H$. Furthermore we assume that the path is constant near each of the endpoints. Then
$$\spfl((D_u)_{u \in [0,1]})=\ind ((\ra_u + D_u)^{APS}) \ .$$
\end{theorem}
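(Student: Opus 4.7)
The plan is to reduce to the invertible-endpoint case of Theorem~\ref{spflind} and then identify the APS index on $[0,1]$ with the cylindrical index on $\R$.

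\textbf{Reduction to invertible endpoints.} Following the construction used in the proof of Prop.~\ref{APS Br-Fred}, set $C_i:=1_{[0,1]}(D_i)-1_{[-1,0)}(D_i)\in\cN$ for $i=0,1$, choose $\phi\in\Cinf([0,1])$ equal to $1$ near $0$ and to $0$ near $1$, and define $\bar D_u:=D_u+\phi(u)C_0+(1-\phi(u))C_1$. Since $C_i$ commutes with $D_i$ and shifts its $[0,1]$-spectrum up by $1$ and its $[-1,0)$-spectrum down by $1$, the endpoints $\bar D_0,\bar D_1$ are invertible while $1_{[0,\infty)}(\bar D_i)=P_i$, so the APS boundary conditions are unchanged. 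The interpolating family $D_u^s:=D_u+s[\phi(u)C_0+(1-\phi(u))C_1]$ is continuous from $H(D_0)$ to $H$ with $s$-independent APS domain, and Breuer-Fredholm for every $s\in[0,1]$ by Prop.~\ref{APS Br-Fred}. Homotopy invariance of the Breuer-Fredholm index therefore yields
\[
\ind((\ra_u+D_u)^{APS})=\ind((\ra_u+\bar D_u)^{APS}).
\]
On the other hand, along the endpoint paths $(D_i+sC_i)_{s\in[0,1]}$ the projection $1_{[0,\infty)}(D_i+sC_i)=P_i$ remains constant (the $[0,1]$-subspace of $D_i$ is shifted to $[s,1+s]\subset[0,\infty)$ and the $[-1,0)$-subspace to $[-1-s,-s)\subset(-\infty,0)$), so under the Phillips convention these paths contribute zero spectral flow. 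The standard boundary-of-square additivity for the 2-parameter family $D_u^s$ then gives $\spfl((\bar D_u))=\spfl((D_u))$. So we may assume $D_0,D_1$ invertible.

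\textbf{From APS to the line.} Under this assumption, extend the path to $\R$ by $\tilde D_u:=D_0$ for $u\le 0$ and $\tilde D_u:=D_1$ for $u\ge 1$. All hypotheses of Theorem~\ref{spflind} hold for $(\tilde D_u)$, and the constant extensions contribute nothing to the spectral flow, so
\[
\spfl((D_u))_{[0,1]}=\spfl((\tilde D_u))_\R=\ind(\ra_u+\tilde D_u).
\]
It remains to identify this cylindrical index with the APS index. Any $f\in\ker_{L^2(\R)}(\ra_u+\tilde D_u)$ satisfies $f(u)=e^{-uD_0}f(0)$ on $(-\infty,0]$ and $f(u)=e^{-(u-1)D_1}f(1)$ on $[1,\infty)$, which are $L^2$ on the half-cylinders precisely when $P_0f(0)=0$ and $(1-P_1)f(1)=0$, respectively. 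Hence $f\mapsto f|_{[0,1]}$ gives a trace-preserving isomorphism $\ker(\ra_u+\tilde D_u)\cong\ker((\ra_u+D_u)^{APS})$, with inverse by exponential extension on the half-cylinders; the same argument applied to the formal adjoint $-\ra_u+\tilde D_u$ yields the cokernel identification. Combining the two displays, $\spfl((D_u))=\ind((\ra_u+D_u)^{APS})$.

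\textbf{Main obstacle.} The subtle step is the spectral-flow invariance in the reduction: it hinges on the explicit form of $C_i$ keeping $1_{[0,\infty)}(D_i+sC_i)$ constantly equal to $P_i$ along the endpoint perturbations, so that these are Phillips-trivial, and on the 2-parameter additivity transferring this to the full homotopy. The cylindrical/APS identification afterwards is routine once both $D_0$ and $D_1$ are invertible.
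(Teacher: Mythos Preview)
Your reduction to invertible endpoints is correct and in fact argued more explicitly than in the paper, which simply asserts parenthetically that the perturbation from Prop.~\ref{APS Br-Fred} leaves both sides unchanged. The two-parameter homotopy with constant $1_{[0,\infty)}(D_i+sC_i)=P_i$ is exactly the right justification.

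The divergence comes in the second step. The paper does not identify kernels directly; instead it uses that $(\ra_u+D_0)^{lAPS}$ and $(\ra_u+D_1)^{rAPS}$ are invertible (shown in the proof of Prop.~\ref{APS Br-Fred}) together with a cut-and-paste argument in the style of Lemma~\ref{cut} to obtain
\[
\ind((\ra_u+D_u)^{APS})+\ind(\ra_u+D_0)+\ind(\ra_u+D_1)
=\ind((\ra_u+D_0)^{lAPS})+\ind(\ra_u+D_u)+\ind((\ra_u+D_1)^{rAPS}),
\]
working purely at the level of Breuer-Fredholm indices. Your direct kernel identification is more elementary in spirit, but in the semifinite setting it has a genuine gap: a bounded linear bijection between closed subspaces of $L^2(\R,H)$ and $L^2([0,1],H)$ does not by itself force the kernel projections to have equal trace. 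What you need is that the restriction and exponential-extension maps commute with $1\otimes\cN'=\cM'$ (true, since the extension involves only $e^{-uD_i}(1-P_i)$ and $e^{-uD_i}P_i$, which lie in $\cN$), so that the bijection produces a partial isometry in $\cM$ implementing Murray--von Neumann equivalence of the two projections. This is correct but must be said. A second, related gap is the well-definedness and boundedness of $g\mapsto g(0)$ on $\ker((\ra_u+D_u)^{APS})$: elements of the domain are a priori only graph-norm limits of smooth functions, and you need regularity near the boundary to make sense of $g(0)$ and to bound $\|g(0)\|$ by $\|g\|_{L^2}$ (so that the inverse extension is bounded). This can be extracted from the explicit half-cylinder inverse formula in Prop.~\ref{APS Br-Fred}, but it is not automatic. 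The paper's cut-and-paste sidesteps both issues entirely.

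Finally, your ``main obstacle'' comment is misplaced: the spectral-flow invariance you flagged is the best-justified part of your argument. The actual subtlety lies in making ``trace-preserving isomorphism'' precise in the type~II setting.
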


A path fulfilling all but the last condition may always be deformed such that it is constant near the endpoints without changing the spectral flow. However, it is not clear from our proof whether for such a more general path the right hand side of the equation is well-defined.

\begin{proof}
By the argument from the beginning of Prop. \ref{APS Br-Fred}, we can assume that $D_0, D_1$ are invertible. (Here we use that the perturbation defined there leaves both sides of the equation unchanged.)
 
In the proof of the previous proposition we saw that $(\ra_u + D_0)^{lAPS}$ and $(\ra_u + D_1)^{rAPS}$ are invertible. In particular they are Breuer-Fredholm with vanishing index. 
It holds that
\begin{align*}
\ind((\ra_u + D_u)^{APS}) &=
\ind((\ra_u + D_u)^{APS})+ \ind(\ra_u+D_0) + \ind(\ra_u+D_1)\\
&=\ind((\ra_u + D_0)^{lAPS}) + \ind(\ra_u + D_u) + \ind((\ra_u + D_1)^{rAPS}) \\
&=\ind(\ra_u + D_u) \\
&=\spfl((D_u)_{u \in [0,1]}) \ .
\end{align*}
Here the second equality follows from a cut-and-paste argument whose proof is as in Lemma \ref{cut}.
The last equality follows from Theorem \ref{spflind}.
\end{proof}

\section{Geometric operators on a foliated manifold}
\label{sign-fol}

In this section we derive some general formulas for the spectral flow of tangential operators for foliations.  The new phenonemon appearing here is that the metric, and thus the von Neumann algebra, may depend on the parameter.

In the following we will use notation and terminology from \cite{MS}.

Let $(M,\mathcal{F})$ be a compact manifold, foliated by an integrable distribution $T\mathcal{F}\subset TM$ of odd dimension $p$. Assume that the foliation is oriented, that is, the bundle $\Lambda^pT^*\mathcal F$ is trivial, and \textit{assume there exists a holonomy invariant transverse measure} $\Lambda$. 

Let $M$ be endowed with a leafwise Riemannian metric (that is, a positive definite element of $C^{\infty}_{tang}(M,S^2T^*\mathcal F)$). The induced leafwise volume form is denoted by $\vol_L$. There is an induced measure $\mu=\vol_L d\Lambda$ on $M$. 

Let $\mathcal R$ be the Borel equivalence relation 
$$
\mathcal R=\{(x,y)\,: x,y\;\;\text{are in the same leaf } L\;\text{of }\;\mathcal F \, \}
$$
with the structure of a measured groupoid given by $\Lambda$.

Let $E$ be a complex vector bundle on $M$ endowed with a hermitian product. We get a field of Hilbert spaces 
$$\mathbf{H}=\{H_x:=L^2(L_x, E_{|L_x})\}_{x\in M} \ .$$ This is endowed with a measurable structure, as explained in the Appendix of \cite{HL2}. The direct integral of the field $\mathbf{H}$ is a separable Hilbert space \cite[p. 172]{Di}. The groupoid
$\mathcal R$ has a natural square integrable representation on $\mathbf{ H}$ given by $$\mathcal R \ni (x,y)\mapsto (\id: H_x\rightarrow H_y) \ .$$ 

We write $\End{}_{\mathcal{R}}(\mathbf{ H})$ for the algebra of uniformly bounded measurable fields of intertwining operators, and define the \emph{von Neumann algebra of the foliation} 
$$W^*(\cF,\cR):=\{[T], T \in \End{}_{\mathcal{R}}(\mathbf{ H}),\text{ where } T_1\sim T_2 \text{ iff equal on $\Lambda$-almost every leaf} \} \ .$$ 
This comes equipped with a semifinite trace $\trl$ \cite[p. 149~ff.]{MS}.

Let $(D_u)_{u\in [0,1]}$ be a path of tangential Dirac operators acting on the sections of $E$ with coefficients depending (for simplicity) smoothly on $u$. Now we also allow the leafwise metric $g_u$ on $M$ and the hermitian product $s_u$ on $E$ to depend smoothly on the parameter $u$. We assume all these paths to be locally constant near $u=0$ and $u=1$.

The closure of $D_u$ -- which we also denote by $D_u$ -- has as domain the measurable field ${\mathbf W}^1$ of Sobolev spaces $W^1(L_x,E_x)$. As topological spaces these are independent of the metric because the leaves are of bounded geometry. Each operator $D_u$ is selfadjoint and affiliated to the von Neumann algebra $W^*(\cF,\cR)$. Since its resolvents are in $K (W^*(\cF,\cR))$, the operator $D_u$ is Breuer-Fredholm. 

Note that the existing definitions \cite{Ph} \cite{Wa} of the spectral flow do not directly apply to the path $(D_u)_{u \in [0,1]}$ since the operators $D_u$ act on different Hilbert fields. We need to trivialize the path of Hilbert fields.

We write ${\mathbf H}^u$ for the above field of Hilbert spaces at the point $u \in [0,1]$. First we identify the hermitian product on the bundle $E$ along the path: we write $E^{s_u}$ for $E$ endowed with the hermitian product $s_u$. There is a unique bundle endomorphism $a_u$ on $E$ such that $s_0(v,a_uw)=s_u(v,w)$ for all $v,w \in E_x,~x \in M$. Since $a_u$ is positive with respect to $s_0$, we can define $T_u=a_u^{1/2}$. Then $T_u:E^{s_u}\to E^{s_0}$ is an isometry. Analogously there is a unique bundle endomorphism $b_u$ on $T\mathcal F$ such that $g_0(v,b_uw)=g_u(v,w)$ for all $v,w \in T_x\mathcal F,~x\in M$. Let $U_u=(\det b_u)^{-1/4}T_u$. This is an endomorphism depending smoothly on $u$ and defining an isometric isomorphism $U_u:{\mathbf H}^u \to {\mathbf H}^0$. 
Note that $U_u \in W^*(\cF,\cR)$. 

We point out the following: as \emph{an algebra} the von Neumann algebra $W^*(\cF,\cR)$ does not depend on the metric, neither does its trace $\trl$. But its involution depends on the metric.

\begin{dfn}
We define the spectral flow $\spfl((D_u)_{u\in [0,1]})$ as the spectral flow of the path $(B_u=U_uD_u U_u^{-1})_{u \in [0,1]}$ of operators acting on ${\mathbf H}^0$. 
\end{dfn}

\begin{prop}\label{spflind geometric}
It holds that
$$\spfl((D_u)_{u\in [0,1]})=\ind((\ra_u +D_u)^{APS})
$$
\end{prop}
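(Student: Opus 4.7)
The plan is to reduce the statement to Theorem \ref{spfl=ind APS} via the pointwise trivialization $U_u\colon \mathbf{H}^u \to \mathbf{H}^0$. Set $B_u := U_u D_u U_u^{-1}$. Because $U_u$ is multiplication by a smooth positive bundle endomorphism (with $U_u$, and hence $\partial_u U_u$, locally constant near the endpoints), the path $(B_u)_{u\in[0,1]}$ consists of selfadjoint operators on the fixed Hilbert field $\mathbf{H}^0$ with common topological domain $\mathbf{W}^1$, it depends continuously on $u$ as a map $H(B_0)\to \mathbf{H}^0$, and it is constant near $u=0,1$. Since the von Neumann algebra $W^*(\cF,\cR)$ is metric-independent as an algebra and $U_u\in W^*(\cF,\cR)$, each $B_u$ is affiliated to $W^*(\cF,\cR)$ with resolvents in $K(W^*(\cF,\cR))$. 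By definition of the spectral flow in the geometric setting and by Theorem \ref{spfl=ind APS},
$$\spfl((D_u)_{u\in[0,1]}) \;=\; \spfl((B_u)_{u\in[0,1]}) \;=\; \ind\bigl((\partial_u + B_u)^{APS}\bigr).$$

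It then remains to identify the right-hand side above with $\ind((\partial_u+D_u)^{APS})$. The global unitary $U\colon L^2([0,1],\mathbf{H}^u)\to L^2([0,1],\mathbf{H}^0)$ defined by $(Uf)(u)=U_u f(u)$ conjugates $\partial_u + D_u$ into
$$U(\partial_u + D_u)U^{-1} \;=\; \partial_u + B_u + A_u, \qquad A_u := U_u(\partial_u U_u^{-1}),$$
where $A_u$ is a smooth, uniformly bounded, selfadjoint field of bundle endomorphisms that vanishes near $u=0,1$. Because $U_u$ is the identity (or at least a constant unitary) near the endpoints, the spectral projections $P_u=1_{[0,\infty)}(D_u)$ at $u=0,1$ are carried to $1_{[0,\infty)}(B_u)$, so $U$ intertwines the two APS closures. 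Hence $\ind((\partial_u+D_u)^{APS})=\ind((\partial_u+B_u+A_u)^{APS})$.

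Finally, the family $s\in[0,1]\mapsto (\partial_u + B_u + sA_u)^{APS}$ is a norm-continuous homotopy of Breuer-Fredholm operators with a fixed APS domain, because each $sA_u$ is a bounded element of $\cM=B(L^2([0,1]))\ten W^*(\cF,\cR)$ vanishing at the boundary $u=0,1$. The homotopy invariance of the Breuer-Fredholm index therefore gives
$$\ind\bigl((\partial_u + B_u + A_u)^{APS}\bigr) \;=\; \ind\bigl((\partial_u + B_u)^{APS}\bigr),$$
and combining the three equalities yields the proposition. The main point to watch is the bookkeeping around the varying Hilbert fields: one must verify that $U$ is genuinely unitary between the $L^2$-spaces built from the $u$-dependent measures (using $U_u=(\det b_u)^{-1/4}T_u$ is designed precisely for this), and that the perturbation $A_u$ does not disturb the APS boundary conditions — both points reduce to the smoothness of $U_u$ and the fact that $\partial_u U_u$ vanishes near the endpoints.
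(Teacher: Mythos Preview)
Your argument is correct and follows the same route as the paper: apply Theorem~\ref{spfl=ind APS} to $B_u=U_uD_uU_u^{-1}$, observe that the global unitary $U$ intertwines $(\partial_u+D_u)^{APS}$ with $(\partial_u+B_u+A_u)^{APS}$, and homotope away the bounded perturbation $A_u$, which is supported in $(0,1)$. One small correction: $A_u=U_u\,\partial_u(U_u^{-1})$ need not be selfadjoint (the selfadjoint operators $U_u$ and $\partial_u(U_u^{-1})$ need not commute), but this is irrelevant to the homotopy step, which only uses that $A_u$ is bounded and vanishes near the endpoints.
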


The operator $(\ra_u + D_u)^{APS}$ is Breuer-Fredholm with respect to the von Neumann algebra associated to the foliated manifold with boundary $M \times [0,1]$ whose leaves are of the form $L \times [0,1]$ with leafwise metric $du^2 + g_u$. Note that since $g_u$ depends on the parameter, the situation here is different than the one considered in the previous section.

\begin{proof}
From Theorem \ref{spfl=ind APS} we get that
$$\spfl((D_u)_{u\in [0,1]})=\spfl((U_u D_u U_u^{-1})_{u\in [0,1]})=\ind((\partial_u+ U_u D_uU_u^{-1})^{APS}) \ .$$ It holds that 
$$\partial_u+ U_u D_u U_u^{-1}=U_u(\partial_u + U_u^{-1} \partial_u(U_u)+ D_u)U_u^{-1} \ .$$
Since $\supp(U_u^{-1} \partial_u(U_u)) \subset (0,1)$, it follows that
$$\ind((\partial_u+ U_u D_u U_u^{-1})^{APS})=\ind((\partial_u + D_u)^{APS}) \ .$$ 
\end{proof}

\begin{prop}
For $s>0$ it holds that
\begin{align}
\label{integral}
\spfl((D_u)_{u \in [0,1]}) =&\sqrt{\frac{s}{\pi}} \int_{0}^1 \trl \left(\dot{D_u}e^{-sD_u^2}\right)du+\frac{1}{2}\eta_s(D_1)-\\ 
\nonumber &-\frac{1}{2}\eta_s(D_0)+\frac{1}{2}\trl P_{\Ker (D_1)}-\frac{1}{2}\trl P_{\Ker (D_0)} \ .
\end{align} 
\end{prop}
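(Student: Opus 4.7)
The plan is to invoke the Carey--Phillips/Wahl integral formula for the spectral flow in a semifinite von Neumann algebra and apply it to the trivialized path $B_u := U_u D_u U_u^{-1}$ acting on the fixed Hilbert field $\mathbf{H}^0$. By the very definition of $\spfl((D_u)_{u \in [0,1]})$ given just before the proposition, the problem thus reduces to proving the analogous formula for $(B_u)$ and then identifying each of the four summands on the right with its counterpart for $(D_u)$.

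First I would check that $(B_u)$ satisfies the hypotheses of the integral formula in $W^*(\cF,\cR)$. Because $U_u$ is a smooth-in-$u$ bundle endomorphism with bounded inverse, it preserves the leafwise Sobolev field $\mathbf{W}^1$; hence the operators $B_u$ share the common domain $\mathbf{W}^1$ and $u\mapsto B_u$ is continuous as a family $\mathbf{W}^1\to \mathbf{H}^0$. Furthermore $(B_u-i)^{-1}=U_u(D_u-i)^{-1}U_u^{-1}\in K(W^*(\cF,\cR))$. With these hypotheses the formula of \cite{CP,Wa} yields
\begin{align*}
\spfl((B_u)_{u\in[0,1]}) =\;& \sqrt{s/\pi}\int_0^1 \trl(\dot B_u\, e^{-sB_u^2})\,du \\
& + \tfrac12\eta_s(B_1)-\tfrac12\eta_s(B_0)+\tfrac12\trl P_{\Ker B_1}-\tfrac12\trl P_{\Ker B_0}.
\end{align*}

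Next I would match the four terms. Unitary conjugation in $W^*(\cF,\cR)$ gives $e^{-sB_u^2}=U_u e^{-sD_u^2}U_u^{-1}$ and $P_{\Ker B_u}=U_u P_{\Ker D_u}U_u^{-1}$, so by cyclicity of $\trl$ the endpoint contributions for $B$ equal those for $D$. For the integral term a direct computation gives
$$\dot B_u = U_u \dot D_u U_u^{-1} + [\dot U_u U_u^{-1},\,B_u].$$
Inserting this, the commutator piece produces $\trl([\dot U_u U_u^{-1},B_u]\,e^{-sB_u^2})$, which vanishes because $B_u$ commutes with $e^{-sB_u^2}$ and $\trl$ is cyclic on this product; the remaining piece collapses, again by cyclicity, to $\trl(\dot D_u\,e^{-sD_u^2})$. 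Summing the four identifications produces the claim.

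The main obstacle lies in the justification of cyclicity: one must know that $B_u e^{-sB_u^2}$, $\dot B_u e^{-sB_u^2}$ and the tail integrals defining $\eta_s$ are $\trl$-trace class, so that both the rearrangements used above and the very definitions of the terms are legitimate. This follows from the Breuer--Fredholm nature of $D_u$ combined with Gaussian heat-kernel decay on the leaves of bounded geometry, ensuring that $e^{-sD_u^2}$ has a smooth leafwise kernel whose fibrewise integral against $\Lambda$ is finite. A secondary point to verify is that the integral formula of \cite{CP,Wa}, originally stated on a fixed Hilbert space, transfers verbatim to the semifinite setting considered here once the path $(B_u)$ is realized on the single Hilbert field $\mathbf{H}^0$.
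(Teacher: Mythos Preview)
Your overall strategy---apply the integral formula to the trivialized path $B_u=U_uD_uU_u^{-1}$ on $\mathbf H^0$ and then transfer each term back to $D_u$ by cyclicity of $\trl$---is exactly the one the paper uses, and your computation $\dot B_u=U_u\dot D_uU_u^{-1}+[\dot U_uU_u^{-1},B_u]$ together with the vanishing of the commutator contribution is equivalent to the paper's manipulation.

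The real gap is the sentence ``With these hypotheses the formula of \cite{CP,Wa} yields \ldots''. The integral formula you write down, with the kernel-projection correction terms $\pm\tfrac12\trl P_{\Ker B_i}$, is not available off the shelf for a general smooth path with non-invertible endpoints. Proposition~6.7 in \cite{Wa} requires invertible endpoints (and then has no kernel terms), while Corollary~8.11 in \cite{CP}, which does carry the kernel corrections, is stated only for \emph{linear} paths $u\mapsto D+uA$. The paper's proof is precisely a reduction to these two existing results: it perturbs the endpoints by bounded $A_0,A_1$ to make them invertible, extends $B_u$ to a path $\alpha$ on $[-2,3]$ built from $B_u$ and four linear pieces, applies \cite[Prop.~6.7]{Wa} to the middle piece and \cite[Cor.~8.11]{CP} to the linear pieces, and then observes the perturbation terms cancel. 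This bookkeeping is the main content of the argument, and you have skipped it.

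A smaller point: your cyclicity step tacitly treats $U_u$ as a unitary in $W^*(\cF,\cR)$ acting on $\mathbf H^0$, but it is only an isometry $\mathbf H^u\to\mathbf H^0$; the involution on the von Neumann algebra depends on the metric. The paper handles this by noting that $\trl$ itself is metric-independent and that the operators $\dot D_u$, $\dot U_u$ are determined by their fibrewise action, so the trace manipulations remain valid. You should make this explicit.
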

Here $\eta_s(D_i)=\ds\frac{1}{\sqrt \pi}\int_s^\infty \trl (D_i e^{-tD_i^2})\ds\frac{dt}{\sqrt t}$ is the truncated foliated eta invariant, defined in \cite[\S 8]{CP}. The operator $P_{\Ker(D_i)}$ denotes the projection onto the kernel of $D_i$.

\begin{proof}
We want to apply the integral formula Prop. 6.7 in \cite{Wa} to the path $B_u$. Since in general the endpoints of $B_u$ are not invertible, as required in \cite{Wa}, we construct a path with linearly perturbed endpoints. Let $A_0, A_1 \in W^*(\cF,\cR)$ be symmetric such that $B_0+A_0$ and $B_1+A_1$ are invertible, and define
the path 
$$
\a(u)=\left\{\begin{array}{ll}
\a_0(u):=B_0+(u+2)A_0\;,\,& u\in [-2,-1]
\\B_0-uA_0\;,\,& u\in [-1,0] \\B_u\;,\, & u\in [0,1] \\B_1+(u-1)A_1\;,\, & u\in [1,2]\\
\a_1(u):=B_1+(u-2)A_1\;, &\, u\in [2,3] \ ,
\end{array}\right .
$$
whose spectral flow equals the spectral flow of $B_u$.
The piece $\b:=\a_{|[-1,1]}$ has now invertible endpoints.
Then write $$\spfl(B_u)=\spfl (\a)=\spfl(\a_0)+\spfl(\a_1)+\spfl(\b).$$
Applying Prop. 6.7 in \cite{Wa} to the term $\spfl(\b)=\spfl (\sqrt{s} \b)$ we obtain
$$\spfl(B_u)= \spfl(\a_0)+\spfl(\a_1)+\frac{1}{\sqrt{\pi}}\int_{-1}^2\sqrt s \trl \left(\dot{\b}(u)e^{-s\b(u)^2}\right)du +$$
$$+\;\;\;\;\;\frac{1}{2}\eta_1(\sqrt s (B_1+A_1)) -\frac{1}{2}\eta_1(\sqrt s (B_0+A_0))
=$$
$$
= \spfl(\a_0)+\spfl(\a_1)+\frac{1}{\sqrt{\pi}}\int_{-1}^0\sqrt{s}\trl\left(-A_0e^{-s(B_0-uA_0)^2}\right)du+\;\;\;\;$$
$$\;\;\;\;\;+\frac{1}{\sqrt{\pi}} \int_{0}^1\sqrt s \trl (\dot{B}_ue^{-sB_u^2})+ \frac{1}{\sqrt{\pi}}\int_{1}^2\sqrt{s}\trl\left(A_1e^{-s(B_1+(u-1)A_1)^2}\right)du+
$$
$$
\;\;\;\;\;\;  +\frac{1}{2}\eta_s(B_1+A_1)-\frac{1}{2}\eta_s(B_0+A_0).
$$
Applying the integral formula Cor. 8.11 in \cite{CP} to the two linear paths $\a_0$ and $\a_1$ yields
$$
\spfl(\a_0)=\spfl(\sqrt s\a_0)=\frac{1}{\sqrt{\pi}}\int_{-2}^{-1} \sqrt s\trl\left(A_0e^{-s(B_0+(u+2)A_0)^2}\right)du+$$
$$
\;\;\;+
\,\frac{1}{2}\eta_s(B_0+A_0)-\frac{1}{2}\eta_s(B_0)-\frac{1}{2}\tr P_{\Ker (B_0)}
$$
and similarly for $\spfl(\a_1)$; now
combining the formulas we get
\begin{align}
\label{int-f}
\spfl(B_u)&=\frac{1}{\sqrt{\pi}} \int_{0}^1\sqrt s \trl \left(\dot{B}_u e^{-sB_u^2}\right)du+ \\
\nonumber & \quad +\frac{1}{2}\eta_s(B_1)-\frac{1}{2}\eta_s(B_0)-\frac{1}{2}\trl P_{\Ker (B_0)}+\frac{1}{2}\trl P_{\Ker (B_1)} \ .
\end{align}
Now we show that we can substitute $B_u$ with $D_u$. 

The derivatives $\dot{D}_u, \dot U_u$ and $\frac{d}{du}(U_u)$ are determined by their fibrewise action on smooth compactly supported functions on the fibers, so they do not depend on the metric used to define the involutive structure on $W^*(\cF,\cR)$. 

From this, the cyclicity of the trace, and by using 
$$(\frac{d}{du}U_u)U_u^{-1}+  U_u(\frac{d}{du}U_u^{-1})=\frac{d}{du}(U_uU_u^{-1})=0$$ 
one obtains
\begin{align*}
\lefteqn{\trl \left(\dot B_ue^{-sB_u^2}\right)}\\
&=\trl \left(\frac{d}{du}(U_uD_uU_u^{-1})e^{-sU_uD_u^2U_u^{-1}}\right)\\
&= \trl \left((\frac{d}{du}U_u)D_uU_u^{-1}U_ue^{-sD_u^2}U_u^{-1} \right) + \trl \left(U_uD_
u(\frac{d}{du}U_u^{-1})U_u e^{-sD_u^2}U_u^{-1}\right) \\
& \quad + \trl \left(U_u\dot D_uU_u^{-1}U_ue^{-sD_u^2}U_u^{-1}\right) \\
&=\trl \left(\dot D_ue^{-sD_u^2}\right) \ .
\end{align*}
For the remaining terms in (\ref{int-f}), it is clear that we can substitute $B_u$ with $D_u$ so that we get (\ref{integral}).
\end{proof}

\section{The spectral flow of the odd signature operator}

In this section we prove the vanishing of the von Neumann spectral flow for a path of odd tangential signature operators along a path of metrics.

Let us recall the definition of the leafwise \textit{odd signature operator}. We assume that $\dim \cF=p=2l+1$. Let $E:=\Lambda T^*{\mathcal F}\otimes \C$, and let $\t$ be the leafwise chirality grading, $\t\phi:=i^{l+1+k(k+1)}*\phi$, $\phi\in C^\infty(L_x,\Lambda^k T^*\cF_{|L_x})$ (where $*$ is the leafwise Hodge star operator). The \emph{leafwise odd signature operator $D^{sign}$} is defined on $\O^*_{tang}(M)=C_{tang}^\infty(M,E)$ by
\begin{equation*}
D^{sign}=\t d+d \t \ .
\end{equation*}
Now we assume that we have a path of leafwise Riemannian metrics $(g_u)_{u \in [0,1]}$ depending smoothly on the parameter and constant near $u=0,1$. Thus we get a path of chirality operators $\tau_u$, and a path of signature operators $D^{sign}_u$, correspondingly.

\begin{prop}\label{sf=0}
The spectral flow of the path $(D^{sign}_u)_{u \in [0,1]}$ is zero.
\end{prop}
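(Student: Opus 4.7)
The plan is to compute $\spfl((D^{sign}_u)_{u \in [0,1]})$ directly via the integral formula (\ref{integral}) from the previous proposition, and to show that every term on its right-hand side vanishes. Since by assumption the path of leafwise metrics (and hence $\tau_u$ and $D^{sign}_u$) is constant near the endpoints, the hypotheses of that formula are satisfied.

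The key tool will be the Cheeger--Gromov Lemma \ref{lem-cg}, which translates the cohomological nature of $\Ker D^{sign}_u$ into an analytic statement. The first step is to use it to derive the pointwise vanishing
$$\trl\bigl(\dot D^{sign}_u \, e^{-s(D^{sign}_u)^2}\bigr) = 0 \quad \text{for every } u \in [0,1] \text{ and } s>0,$$
so the whole integral term in (\ref{integral}) disappears. The algebraic mechanism behind this is that $\tau_u$ commutes with $D^{sign}_u = \tau_u d + d \tau_u$, while $\dot\tau_u$ anticommutes with $\tau_u$ because $\tau_u^2 = 1$; combined with $\dot D^{sign}_u = \{d, \dot\tau_u\}$ and the cyclicity of $\trl$, these ingredients produce a supersymmetric cancellation. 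Next, leafwise Hodge theory identifies $\Ker D^{sign}_u$ on each leaf with the leafwise de Rham cohomology of that leaf, so $\trl P_{\Ker D^{sign}_u}$ is a foliated Betti number of $(M,\cF,\Lambda)$, topological and in particular independent of $u$; hence the two kernel-projection terms in (\ref{integral}) cancel. Finally, the same pointwise vanishing inserted into the variational formula for $\eta_s$ (cf.\ \cite[\S 8]{CP}) forces $u \mapsto \eta_s(D^{sign}_u)$ to be constant, so the two truncated $\eta$-contributions cancel as well; substituting the three vanishings into (\ref{integral}) then yields $\spfl((D^{sign}_u)_{u \in [0,1]}) = 0$.

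The main obstacle is the first step: one has to check that the intermediate products of $\dot\tau_u$, $d$ and the heat operators $e^{-s(D^{sign}_u)^2}$ appearing in the rearrangements via cyclicity of $\trl$ actually belong to the appropriate trace ideal of $W^*(\cF,\cR)$, so that the supersymmetric manipulations are legitimate in the semifinite von Neumann setting. This is exactly the content of Lemma \ref{lem-cg}; once it is in place, the three cancellations become a routine substitution into the integral formula.
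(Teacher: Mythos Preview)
Your proof has a genuine gap: you misstate what Lemma \ref{lem-cg} actually gives, and the stronger pointwise vanishing you need is in fact false.

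Lemma \ref{lem-cg} does \emph{not} assert that $\trl\bigl(\dot D^{sign}_u e^{-s(D^{sign}_u)^2}\bigr)=0$ for each $u$ and each $s>0$. It only asserts that the \emph{integral} of this quantity, multiplied by $\sqrt{s}$, tends to zero as $s\to\infty$; its proof is a spectral estimate, not an algebraic identity. Your ``supersymmetric cancellation'' does not go through: while $\tau_u$ commutes with $D^{sign}_u$ and anticommutes with $\dot\tau_u$, the differential $d$ is \emph{not} homogeneous for the $\tau$-grading. Conjugation by $\tau$ sends $d$ to $\delta=\tau d\tau$, so one only obtains
\[
\trl\bigl((\dot\tau d+d\dot\tau)e^{-sD^2}\bigr)=-\trl\bigl((\dot\tau\delta+\delta\dot\tau)e^{-sD^2}\bigr),
\]
which is not the same expression with a sign flip, and hence does not force the trace to vanish. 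In fact the pointwise vanishing is false already in the classical (type I) case: by the Atiyah--Patodi--Singer signature formula, the $\eta$-invariant of the boundary signature operator compensates the metric-dependent local $L$-form, so $\eta(D^{sign}_u)$ genuinely varies with the metric, and with it $\trl(\dot D^{sign}_u e^{-s(D^{sign}_u)^2})$ for finite $s$. Consequently your third step, deducing that $\eta_s(D^{sign}_u)$ is constant in $u$, also fails.

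The paper's argument is different. Formula (\ref{integral}) holds for \emph{every} $s>0$, and since the left-hand side is independent of $s$ one may let $s\to\infty$. Then $\eta_s(D_i)\to 0$ by definition of the truncated $\eta$, the kernel terms cancel by the homotopy invariance of foliated Betti numbers \cite{HL2} (this part of your argument is correct), and the integral term tends to zero by Lemma \ref{lem-cg}. The content of that lemma is that on $(\Ker D_u)^\perp$ one can write $\dot D_u$ as a bounded operator times $D_u$ (using $\dot D_u=\dot\tau_u d+d\dot\tau_u$ and the Hodge decomposition), whence the integrand is controlled by $\sqrt{s}\,\trl(|D_u|e^{-sD_u^2})$, which is then shown to vanish uniformly in $u$ as $s\to\infty$ via a direct spectral estimate. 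No pointwise algebraic identity is available or needed.
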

From (\ref{integral})  we have
\begin{equation}
\label{equCG}
\spfl(D^{sign}_u)=\frac{1}{\sqrt{\pi}} \int_{0}^1\sqrt s \trl \left(\dot{D}^{sign}_ue^{-s(D^{sign}_u)^2}\right)du+\frac{1}{2}\eta_s(D^{sign}_1)-\frac{1}{2}\eta_s(D^{sign}_0)
\end{equation} 
since $\displaystyle\trl P_{\Ker (D_1)}=\trl P_{\Ker (D_0)}$ by the homotopy invariance of foliation Betti numbers \cite{HL2}. 
 
Now take the limit for $s\rightarrow \infty$. Clearly $\displaystyle\lim_{s\rightarrow \infty} \eta_s(D_i)=0$.

The following statement, proved in the case of a covering by Cheeger and Gromov in \cite{CG2}, implies the vanishing of the spectral flow.

\begin{lem}[\cite{CG2}] \label{lem-cg} It holds that
\begin{equation}
\label{cg}
\lim_{s\rightarrow \infty} \int_0^1 \sqrt s \trl \left(\dot{D}^{sign}_ue^{-s(D^{sign}_u)^2}\right)du=0 \ .
\end{equation}\end{lem}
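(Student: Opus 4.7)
The strategy is to adapt the Cheeger–Gromov argument \cite{CG2} from the covering setting to the foliation setting, using the measurable Hodge decomposition in place of ordinary Hodge theory. Write $P_u$ for the orthogonal projection onto $\ker D^{sign}_u$. By the homotopy invariance of the foliated Betti numbers \cite{HL2} (already used in (\ref{equCG})), $\trl P_u$ is constant in $u$. The natural first move is to decompose
$$\sqrt{s}\,\trl\bigl(\dot D^{sign}_u e^{-s(D^{sign}_u)^2}\bigr)=\sqrt{s}\,\trl(\dot D^{sign}_u P_u)+\sqrt{s}\,\trl\bigl(\dot D^{sign}_u(e^{-s(D^{sign}_u)^2}-P_u)\bigr).$$

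For the first piece I would show $\trl(\dot D^{sign}_u P_u)=0$ pointwise in $u$, which is the cohomological input. Differentiating $D^{sign}_u P_u=0$ in $u$ gives $\dot D^{sign}_u P_u=-D^{sign}_u \dot P_u$. Differentiating $P_u^2=P_u$ gives $P_u\dot P_u P_u=0$, so $\dot P_u=P_u^{\perp}\dot P_u P_u+P_u\dot P_u P_u^{\perp}$. Since $D^{sign}_u$ is selfadjoint, $P_u D^{sign}_u=0$, and cyclicity of the $\Lambda$-trace then gives $\trl(D^{sign}_u\dot P_u)=0$. This term contributes zero even after multiplication by $\sqrt s$.

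For the second piece I would exploit the special algebraic structure of the signature operator. Differentiating $D^{sign}_u=\tau_u d+d\tau_u$ yields $\dot D^{sign}_u=\dot\tau_u d+d\dot\tau_u$; on the complement of harmonic forms, this can be rewritten, using the measurable $L^2$ Hodge decomposition, in the form $\dot D^{sign}_u(1-P_u)=[D^{sign}_u,A_u]+D^{sign}_u B_u$ for bounded measurable operators $A_u,B_u\in W^*(\cF,\cR)$ depending smoothly on $u$. The commutator term contributes nothing to $\trl(\,\cdot\,e^{-s(D^{sign}_u)^2})$ by cyclicity, while the remaining term is estimated using $\|D^{sign}_u e^{-s(D^{sign}_u)^2}\|\le C s^{-1/2}$, producing a bound on the integrand of order $s^{-1/2}\cdot\sqrt s\cdot \|B_u\|$ — but without further decay this is only $O(1)$. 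To actually obtain convergence to zero one iterates: using that $B_u$ itself takes values in the range of $D^{sign}_u$ modulo $\ker D^{sign}_u$, one absorbs a second factor of $D^{sign}_u$ into the heat kernel. Integrating in $u\in[0,1]$ and applying dominated convergence then yields $\sqrt{s}\int_0^1\trl(\dot D^{sign}_u(e^{-s(D^{sign}_u)^2}-P_u))\,du\to 0$.

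The main obstacle is this last step: in the type II foliation setting $D^{sign}_u$ need not have any spectral gap above zero, so the naive estimate $\|e^{-s(D^{sign}_u)^2}(1-P_u)\|\to 0$ is only strong-operator and does not survive multiplication by $\sqrt s$. What makes the argument go through is precisely the Hodge-theoretic identity identifying $\dot D^{sign}_u$, modulo $\ker D^{sign}_u$, with an operator in the range of $D^{sign}_u$: this is where the "cohomological nature of the kernel" of the signature operator (as highlighted in the introduction) is converted into an analytic cancellation, and it is the content of Cheeger and Gromov's original lemma in \cite{CG2}, whose proof transfers to the foliated setting by performing all constructions fibrewise and passing to $\Lambda$-traces.
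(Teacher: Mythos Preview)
Your overall strategy is in the right spirit, and your treatment of the kernel piece is fine (indeed more explicit than the paper's, which silently passes over it). The genuine gap is in the second piece. First, the decomposition $\dot D_u(1-P_u)=[D_u,A_u]+D_uB_u$ with bounded $A_u,B_u$ is asserted but never justified; the paper instead uses the finer three-part Hodge decomposition $\ker\Delta\oplus\overline{\Im d}\oplus\overline{\Im\delta}$, on whose non-kernel summands one has the exact identity $Q_u\dot D_u Q_u=Q_u(\dot\tau_u\tau_u^{-1})D_uQ_u$ (here $Q_u$ is the projection onto $\overline{\Im\delta}$; the piece on $\overline{\Im d}$ is symmetric). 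Either way one is reduced to showing $\sqrt{s}\,\trl\bigl(|D_u|e^{-sD_u^2}\bigr)\to 0$ uniformly in $u$.

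This is where your argument breaks. The operator-norm bound $\|D_u e^{-sD_u^2}\|\le Cs^{-1/2}$ indeed gives only $O(1)$, and your proposed ``iteration'' cannot work: it would require writing $B_u$ on $(\ker D_u)^\perp$ as $D_u$ times a bounded operator, i.e.\ applying $D_u^{-1}$ there, which is unbounded precisely because there is no spectral gap. What is actually needed is a trace-norm (not operator-norm) estimate. With $E_\lambda$ the spectral resolution of $D_u^2$ and $d\sigma=\trl\,d(E_\lambda\cap E_0^\perp)$, one splits $\sqrt{s}\int_0^\infty\sqrt\lambda\,e^{-s\lambda}\,d\sigma$ at the scale $\mu=(2(s-1))^{-1/2}$: on $[0,\mu]$ the integrand is at most $(2es)^{-1/2}$, giving a bound by a constant times $\trl(E_\mu\cap E_0^\perp)$, which tends to $0$ as $\mu\to 0$ by normality of the trace; on $[\mu,\infty)$ one bounds by $\sqrt{s\mu}\,e^{-(s-1)\mu}\,\trl(e^{-D_u^2})\to 0$ since $(s-1)\mu\sim\sqrt{s}$. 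Both bounds are uniform in $u$, so dominated convergence finishes. This spectral splitting---balancing the small spectral mass of $D_u^2$ near (but not at) zero against heat decay away from zero---is the missing analytic ingredient; no iteration is available.
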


\begin{proof}
Let $\delta=\tau d \tau$. Consider the classical decomposition 
\begin{equation*}
\mathbf{ H}=\ker \Delta\oplus \overline{\Im d}\oplus \overline{\Im  \de}
\end{equation*}
and let $P_u$ be the projection onto $\overline{\Im d}$, and $Q_u$ the projection onto $\overline{\Im \de}$. Note that both depend on $u$ via the metric.
%
We now estimate the integrand separately on the subspaces $\overline{\Im d}$ and $\overline{\Im \de}$. Look for example at the estimate of 
$$
\sqrt s \trl\left(\dot{D}_uQ_u e^{-sD_u^2}\right) \ ,
$$ 
where $D_u=D_u^{sign}$. Observe that 
$$
\dot{D}_uQ_u=\dot{\t}_u \,d Q_u=\dot{\t}_u\t_u^{-1}\t_u d Q_u=C_u D_uQ_u
$$
with $C_u \in W(\cF, \cR)$. Then
\begin{align*}
\left|\sqrt s\trl\left(\dot{D}_uQ_u e^{-sD_u^2}\right)\right|&=\left|\sqrt s\trl\left( C_uD_u Q_ue^{-sD_u^2}\right)\right|\\
&\leq\n{C_uQ_u}\left|\sqrt s \trl\left(|D_u| e^{-s D_u^2}\right)\right|
\end{align*}

Let now $E_\la=E_\la(u)$ be the spectral measure associated to the Laplacian $D^2_u$. (To simplify the notation, in the following we will not always write explicitely the dependence on $u$). Now, since $|D_u| e^{-sD_u^2} E_0=0$, we can write for $\mu>0$
$$
|\sqrt s\trl\left(|D_u| e^{-sD_u^2}\right)|\leq |\sqrt s\trl \int_0^\infty \sqrt{\la} e^{-s\la}d(E_\la\cap E_0^\perp)|\leq 
$$
\begin{equation}\label{est}
\leq \sqrt{s}\int_0^\mu \sqrt{\la}e ^{-s\la}d\s+\sqrt{s} \int_\mu^\infty \sqrt{\la} e^{-s\la} d\s= I + II
\end{equation}
where $\s=\trl d(E_\la\cap E_0^\perp)$.  For the term (I), observe that the positive function $f(\la)=\sqrt{\la}e^{-s\la}$ has its maximum in the point $\la=\frac{1}{2s}$, with value $f(\frac{1}{2s})=e^{-\frac{1}{2}}\sqrt{\frac{1}{2s}}$, so that we get
$$\sqrt{s}\int_0^\mu \sqrt{\la}e ^{-s\la}d\s\leq \sqrt{s}e^{-\frac{1}{2}}\frac{1}{\sqrt{2s}}\int_0^\mu d\s=\frac{e^{-\frac{1}{2}}}{\sqrt 2}\trl(E_\mu\cap E_0^\perp)
$$ 
In order to estimate the second term (II) in \eqref{est} we write
$$\int_\mu ^\infty \sqrt\la e^{-s\la}d\s=\int_\mu ^\infty \sqrt\la e^{-(s-1)\la}e^{-\la}d\s
$$ 
The function $g(\la)=\sqrt\la e^{-(s-1)\la}$ has its maximum in the point $\la=\frac{1}{2(s-1)}$. Now we set $\mu=\frac{1}{\sqrt{2(s-1)}}$. For $s$ large we have that  $g(\la)\leq g(\mu)$ on the interval $[\mu, \infty)$, getting
$$
\int_\mu ^\infty \sqrt\la e^{-s\la}d\sigma\leq  \sqrt \mu e^{-(s-1)\mu}\trl(e^{-D_u^2})
$$
Summarizing,
$$
I+II \leq \frac{e^{-\frac{1}{2}}}{\sqrt 2}\trl(E_\mu\cap E_0^\perp)+ \sqrt s \sqrt \mu e^{-(s-1)\mu}\trl(e^{-D_u^2}) \ .
$$
By $s\mu=O(\sqrt s)$ and since $\trl(e^{-D_u^2})$ is uniformly bounded in $u$, the second term vanishes uniformly in $u$ as $s\rightarrow \infty$. In the first term we have that $\lim_{\mu\rightarrow 0}$ $\trl(E_\mu\cap E_0^\perp)\rightarrow 0$. Since $\trl(E_\mu\cap E_0^\perp)$ is uniformly bounded in $u$, we can use Lebesgue theorem and get (\ref{cg}).
\end{proof}

In the classical case of a closed manifold, the vanishing of the spectral flow for a path of signature operators can be deduced from the fact that the projection onto the kernel depends smoothly on the parameter, see for example \cite[\S 8.15]{Me}. For the tangential signature operator on foliations, the smooth dependence of this projection can be proven as in Theorem 2.2 of \cite{GR}. However, in the von Neumann algebraic situation, the vanishing of the spectral flow does not follow from the smooth dependence of the projection onto the kernel, as the following example shows. Consider the path of operators $(D_u:=i\partial_x + u)_{u\in [-1,1]}$ on $H=L^2(\R)$ and let $\cN$ be the von Neumann algebra of $\Z$-equivariant bounded operators on $L^2(\R)$ with its standard semifinite trace $\tr_{\Z}$. For any $u \in [-1,1]$ the kernel of $D_u$ is even trivial. Compute the von Neumann spectral flow via its definition \cite{Ph}: let $1_{\ge 0}(D)=1_{[0,\infty)}(D)$ be the spectral projection onto the positive part of the spectrum; we get 
\begin{multline*}
\spfl((D_u)_{u \in [-1,1]})=\ind{}_{\Z} \left(1_{\ge 0}(D_{-1})1_{\ge 0}(D_{1}):1_{\ge 0}
(D_{1})H\rightarrow 1_{\ge 0} (D_{-1})H\right)=\\
=\Tr_{\Z}\left(1_{[-1,\infty)}(i\partial_x)1_{(-\infty,1]}(i\partial_x)\right)-\Tr_{\Z}\left(1_{[1,\infty)}(i\partial_x)1_{(-\infty,-1)}(i\partial_x)\right)= \\
=\Tr_{\Z} 1_{[-1,1]}(i\partial_x)\neq 0 \ .\;\;\;\;\;\;\;\;
\end{multline*}
By the definition of spectral flow \cite{Ph}, a very direct proof of Prop. \ref{sf=0} could be given if one could show that the projection onto the \emph{positive part} of the spectrum of $D^{sign}_u$ depends continuously on $u$: such a proof is not known.

\section{Measured analytic signature of foliated manifolds with boundary}
As an application of Theorem \ref{spflind geometric} and Prop. \ref{sf=0}, we now prove that the analytic $\Lambda$-signature for a foliated manifolds with boundary does not depend on the metric. 

Let $(M,\cF)$ be a foliated manifold with boundary, with even dimensional leaves and foliation structure transverse to the boundary. Assume that it admits a holonomy invariant transverse measure $\Lambda$. Furthermore we assume that $M$ is endowed with a leafwise Riemannian metric, which is of product type near the boundary.
The tangential signature operator $D^{sign}$ is $\mathbb Z_2$-graded by chirality. We denote by $(D^{sign})^{APS}$ its closure defined by using Atiyah-Patodi-Singer index conditions. The following definition was introduced by Antonini in \cite{An2}. 

\begin{dfn}
The analytic $\Lambda$-signature is by definition the measured $L^2$-index $$\s_{\Lambda, an}(M,\partial M)=\ind_{L^2,\Lambda}(D^{sign,+})=\ind{}((D^{sign,+})^{APS}) +\trl (P_{\Ker D^\partial}) \ ,$$ 
where
$D^\partial$ is the odd signature operator induced on the boundary. 
\end{dfn}

The measured $L^2$-index $\ind_{L^2,\Lambda}(D^{sign,+})$ is defined using the corresponding cylindric setting. We will not use it in the following.

\begin{prop}\label{ind-metric}
$\s_{\Lambda, an}(M,\partial M)$ does not depend on the metric on $M$.
\end{prop}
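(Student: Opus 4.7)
The plan is to join $g_0$ and $g_1$ by a smooth path $(g_u)_{u \in [0,1]}$ of leafwise Riemannian metrics on $M$, each of product type near $\partial M$ and locally constant in $u$ near $u=0,1$. Restricting to $\partial M$ gives a path $(g_u^\partial)_{u \in [0,1]}$ and correspondingly a smooth path $(D^\partial_u)_{u \in [0,1]}$ of tangential odd signature operators on the closed foliated manifold $\partial M$. It suffices to show that
\[
\s_{\Lambda,an}(M,\partial M;g_u) = \ind\bigl((D^{sign,+}_{g_u})^{APS}\bigr) + \trl\bigl(P_{\Ker D^\partial_u}\bigr)
\]
agrees at $u=0$ and $u=1$.

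First I would apply Prop.~\ref{sf=0} to the path $(D^\partial_u)_{u \in [0,1]}$ on the closed foliated manifold $\partial M$, which has even-dimensional leaves $\partial \cF$ of odd dimension; this gives $\spfl((D^\partial_u)_{u \in [0,1]}) = 0$. Combined with Prop.~\ref{spflind geometric} this yields
\[
\ind\bigl((\ra_u + D^\partial_u)^{APS}\bigr) = 0,
\]
where the operator on the right is the APS realization on the foliated cylinder $\partial M \times [0,1]$ with leafwise metric $du^2 + g_u^\partial$, Breuer--Fredholm by the geometric counterpart of Prop.~\ref{APS Br-Fred}.

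Second, I would invoke a von Neumann relative index theorem following \cite{LP}, adapted to the foliated semifinite setting of \cite{An}. Concretely, form the foliated manifold with boundary $\tilde M = (M, g_0) \cup_{\partial M} (\partial M \times [0,1])$ by attaching the cylinder with the interpolating metric; its new boundary $\partial M \times \{1\}$ carries $g_1^\partial$. The cut-and-paste formula for APS Breuer--Fredholm indices along the interior hypersurface $\partial M \times \{0\}$, with the mismatch of the APS projections on the two sides accounted for by the kernels of the boundary operators, reads
\[
\ind\bigl((D^{sign,+}_{\tilde M})^{APS}\bigr) = \ind\bigl((D^{sign,+}_{(M,g_0)})^{APS}\bigr) + \ind\bigl((\ra_u + D^\partial_u)^{APS}\bigr) + \trl P_{\Ker D^\partial_0} - \trl P_{\Ker D^\partial_1}.
\]
Since the bulk metric on $\tilde M$ can be smoothly deformed to $g_1$ while keeping the product-type collar of the new boundary fixed, homotopy invariance of the Breuer--Fredholm APS index gives $\ind((D^{sign,+}_{\tilde M})^{APS}) = \ind((D^{sign,+}_{(M,g_1)})^{APS})$. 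Combining this with the vanishing from the first step yields
\[
\ind\bigl((D^{sign,+}_{(M,g_1)})^{APS}\bigr) + \trl P_{\Ker D^\partial_1} = \ind\bigl((D^{sign,+}_{(M,g_0)})^{APS}\bigr) + \trl P_{\Ker D^\partial_0},
\]
which is the desired metric independence of $\s_{\Lambda,an}(M,\partial M)$.

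The main obstacle is the careful formulation of the von Neumann relative index theorem in the foliated Breuer--Fredholm setting, in particular tracking the kernel correction terms $\trl P_{\Ker D^\partial_i}$ that arise from the mismatch of the APS projections $P_{\ge 0}(D^\partial_u)$ on the finite-trace kernel of $D^\partial_u$ at the gluing hypersurface; this parallels the corresponding step in \cite{LP} but must be done with respect to the semifinite trace $\trl$ on the foliation von Neumann algebra of $\tilde M$. A secondary subtle point is the bulk homotopy invariance of the APS index on $\tilde M$, which should follow from the parametrix and Neumann-series methods used in the proof of Prop.~\ref{APS Br-Fred}, since the path of signature operators on $\tilde M$ in question has common domain and continuously varying bounded transform.
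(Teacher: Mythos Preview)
Your overall strategy---join two metrics by a path, use Prop.~\ref{spflind geometric} and Prop.~\ref{sf=0} to kill the cylinder index, and use a von Neumann relative index/gluing argument---matches the paper's. The organization differs: the paper glues to a \emph{closed} foliated manifold $M_1\cup_{\partial M}([1,2]\times\partial M)\cup(-M_2)$ and derives the clean formula
\[
\ind\bigl((D^{sign,+}_1)^{APS}\bigr)-\ind\bigl((D^{sign,+}_2)^{APS}\bigr)=\ind\bigl((\partial_u+D^\partial_u)^{APS}\bigr),
\]
with \emph{no} kernel correction terms; combined with Prop.~\ref{spflind geometric} and Prop.~\ref{sf=0} this shows the APS indices agree. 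The metric-independence of the remaining term $\trl P_{\Ker D^\partial}$ is then handled \emph{separately}, by invoking the homotopy invariance of foliated Betti numbers due to Heitsch--Lazarov \cite{HL2}.

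This is precisely where your argument has a gap. Your displayed gluing formula carries the correction $+\trl P_{\Ker D^\partial_0}-\trl P_{\Ker D^\partial_1}$, but there is only one gluing hypersurface (at $u=0$), so there is no mechanism producing a term involving $\Ker D^\partial_1$; and with the APS conventions used here the cut-and-paste formula in fact has no kernel defect at all (as the computation via Ramachandran's index theorem confirms). The extra terms appear to have been inserted so that the final identity $\sigma_{\Lambda,an}(g_1)=\sigma_{\Lambda,an}(g_0)$ falls out, but that is exactly the content of \cite{HL2} that you have not invoked. In other words, your relative index formula, if it were correct as stated, would give a new proof that $\trl P_{\Ker D^\partial}$ is metric-independent---which is a nontrivial theorem, not a bookkeeping byproduct of gluing. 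To repair the argument, state the gluing formula without kernel corrections (as in the paper) and then cite \cite{HL2} for the constancy of $\trl P_{\Ker D^\partial}$ along the path.
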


\begin{proof}
Our proof is inspired by arguments in \cite[\S 6.1]{LP}.

Let $g_u$ be a smooth path of leafwise Riemannian metrics on $M$, which are of product type near the boundary. Let $D^{sign}_u$ be the induced path of foliated signature operators.
The following gluing formula holds: 
\begin{equation}\label{variational APS}
\ind{}((D^{sign,+}_1)^{APS})-\ind{}((D^{sign,+}_2)^{APS})=\ind{}((\partial_u+D^\partial_u)^{APS}) \ .
\end{equation}
This follows from a von Neumann relative index theorem applied to the tangential signature operator on the closed foliated manifold $M_1\cup_{\partial M} ([1,2]\times \partial M)\cup (-M_2)$ where $M_{i}$ is $M$ with metric $g_{i}$, $i\in\{1,2\}$, and the tangential metric on $[1,2]\times \partial M$ is $du^2+g_u$.
The technique to prove the relative index theorem is essentially the one we used  in Lemma \ref{cut}. Alternatively, it follows from Ramachandran's index theorem \cite{Ram}.
We conclude from Theorem \ref{spflind geometric} that
\begin{equation}
\label{aps-sf}
\ind{}((D^{sign,+}_1)^{APS})-\ind{}((D^{sign,+}_2)^{APS})=\spfl((D^\partial_u)_{u\in[1,2]}) \ .
\end{equation}

Now the assertion follows from Prop. \ref{sf=0} by using that $\trl (P_{\Ker D^\partial})$ does not depend on the metric as well, being a homotopy invariant \cite{HL2}.
\end{proof}

Our methods generalize directly to tangential signature operators twisted by a bundle $E$ which is flat near the boundary. However, it seems that the homotopy invariance of the foliated Betti numbers \cite{HL2}, which we used in the proof, has not yet been established for the twisted situation in general (see Theorem 10.6 in \cite{BH} for a partial result). 

Note that then connection on $E$ has to be fixed near the boundary since in general there is spectral flow if the flat connection varies. 

\appendix

\section{On the integral formula for bounded operators}\label{integral formula}

Let $\cN$ be a von Neumann algebra acting on a separable Hilbert space $H$ and endowed with a faithful normal semifinite trace $\tr$. 

We denote by $l^1(\cN)$ the space of operators $K \in \cN$ such that $\tr |K| < \infty$ and endow it with the norm $\|K\|_1=\|K\| + \tr(|K|)$.

In the following a path of bounded operators is not assumed to be continuous in the norm topology. The conditions will be specified. 

We take the opportunity to point out an error in the assumptions of Lemma 6.1 and 6.3, Prop. 6.2 and Theorem 6.4 of \cite{Wa}, which is relevant for the following: There it is always assumed that one deals with a path $(F_u)_{u \in [0,1]}$ in $\cN$ such that $(u \mapsto F_uK) \in C^1([0,1],l^1(\cN))$ for all $K \in l^1(\cN)$. What is needed for the proofs is the stronger assumption that
the map 
$$l^1(\cN) \to C^1([0,1],l^1(\cN)) \ ,$$
$$K \mapsto (u \mapsto F_uK) $$ 
is well-defined and bounded. 
The conclusions of Lemma 6.1 and Prop. 6.2 should be modified accordingly. The arguments still remain valid.

\begin{lem}
Let $(F_u)_{u \in [0,1]}$ be a path in $\cN$.
If the map $$\cJ \to C^1([0,1],\cJ) \ ,$$
$$K \mapsto (u \mapsto F_uK) $$ 
is well-defined and bounded for $\cJ=K(\cN)$, then it is also well-defined and bounded for $\cJ=l^1(\cN)$. 
\end{lem}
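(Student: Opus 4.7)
The plan is to produce an element $\Phi_u\in \cN$ that behaves as the strong-operator derivative of $F_u$, and then to establish a key lemma saying that right multiplication by an element of $l^1(\cN)$ turns bounded SOT-null nets in $\cN$ into $\|\cdot\|_1$-null nets; the result is then obtained by applying this lemma to the difference quotient $h^{-1}(F_{u+h}-F_u)-\Phi_u$ and to $\Phi_{u+h}-\Phi_u$.

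I would first extract $\Phi_u$ from the hypothesis. For $K\in K(\cN)$ let $G_u(K)\in K(\cN)$ be the operator-norm derivative of $u\mapsto F_uK$, which exists by assumption; the map $G_u:K(\cN)\to K(\cN)$ is linear, uniformly bounded in $u$ by the hypothesis constant $B$, and right $\cN$-linear since $G_u(KL)=G_u(K)L$ for $L\in \cN$. Since the ranges of finite-trace projections $P\in \cN$ span a dense subspace of $H$, one defines $\Phi_u v:=G_u(P)v$ for $v\in PH$; this is independent of $P$ by the identity $G_u(P)=G_u(P'P)=G_u(P')P$ when $P\le P'$, and density together with the uniform bound extend it to $\Phi_u\in \cN$ with $\sup_u\|\Phi_u\|\le B$. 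Routine checks yield $G_u(K)=\Phi_u K$ for all $K\in K(\cN)$, the integral identity $F_{u+h}v-F_u v=\int_u^{u+h}\Phi_t v\,dt$ for every $v\in H$, strong-operator continuity of $u\mapsto \Phi_u$, and the uniform bound $\|h^{-1}(F_{u+h}-F_u)\|\le B$.

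The core of the argument is the lemma: if $(X_h)$ is a norm-bounded net in $\cN$ with $X_h\to 0$ strongly and $K\in l^1(\cN)$, then $\|X_hK\|_1\to 0$. Factor $K=BA$ via the polar decomposition $K=U|K|$ with $A=|K|^{1/2}$ and $B=U|K|^{1/2}$, both in $l^2(\cN)$ and satisfying $\|A\|_2\|B\|_2=\tr(|K|)\le \|K\|_1$. Combining $\|Y\|_1=\|Y\|+\tr|Y|$ with the estimates $\|CD\|,\,\tr|CD|\le \|C\|_2\|D\|_2$ gives
\[
\|X_h K\|_1\le 2\,\|X_h B\|_2\,\|A\|_2,
\]
so the lemma reduces to $\|X_hB\|_2\to 0$. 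Now $\|X_hB\|_2^2=\tr(X_h^* X_h\cdot BB^*)$ with $BB^*\in l^1(\cN)$, and the functional $T\mapsto \tr(T\cdot BB^*)$ on $\cN$ is normal. Bounded SOT-convergence $X_h\to 0$ implies WOT-convergence $X_h^* X_h\to 0$ by Cauchy--Schwarz, and on norm-bounded subsets of $\cN$ the WOT coincides with the $\sigma$-weak topology, so normal functionals pass to the limit and $\|X_hB\|_2\to 0$.

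To conclude, apply the lemma to the bounded SOT-null families $X_h=h^{-1}(F_{u+h}-F_u)-\Phi_u$ (bounded and SOT-null by the second paragraph) and $X_h=\Phi_{u+h}-\Phi_u$ (bounded and SOT-null by strong continuity of $\Phi_u$): the first yields that $u\mapsto F_uK$ is differentiable in $\|\cdot\|_1$ with derivative $\Phi_u K\in l^1(\cN)$, and the second yields that this derivative is $\|\cdot\|_1$-continuous in $u$. Combined with the obvious bounds $\|F_uK\|_1\le B\|K\|_1$ and $\|\Phi_uK\|_1\le B\|K\|_1$, this gives well-definedness and boundedness of the map $l^1(\cN)\to C^1([0,1],l^1(\cN))$. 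The main obstacle is the key lemma, which hinges on the factorization $l^1(\cN)=l^2(\cN)\cdot l^2(\cN)$, the normality of functionals induced by $l^1(\cN)$ elements, and the equality of the WOT and the $\sigma$-weak topology on norm-bounded subsets of $\cN$.
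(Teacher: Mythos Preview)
Your overall strategy---extracting a strong-operator derivative $\Phi_u$ and then pushing convergence through right multiplication by $K\in l^1(\cN)$---is viable and genuinely different from the paper's, but your key lemma has a real gap. You assert $\|CD\|\le\|C\|_2\|D\|_2$ for $C,D\in l^2(\cN)$; this fails in general semifinite von Neumann algebras. Take $\cN=L^\infty([0,1])$ with the Lebesgue trace, $K=1$, and $X_h=1_{[0,h]}$: then $X_h\to 0$ strongly with $\|X_h\|=1$, the factorization gives $A=B=1$ with $\|X_hB\|_2=\sqrt{h}\to 0$, yet $\|X_hK\|=1$, so $\|X_hK\|_1\ge 1$. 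Thus your key lemma, as stated, is false.

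The repair is local. Your H\"older and normality argument does correctly give $\tr|X_hK|\to 0$. For the operator-norm half of $\|\cdot\|_1$ you should not invoke the lemma at all: since $l^1(\cN)\subset K(\cN)$, the hypothesis already says that $u\mapsto F_uK$ is $C^1$ in operator norm with derivative $G_u(K)=\Phi_uK$, so both $\|(h^{-1}(F_{u+h}-F_u)-\Phi_u)K\|\to 0$ and $\|(\Phi_{u+h}-\Phi_u)K\|\to 0$ follow directly. Combining the two pieces finishes your proof.

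For comparison, the paper's argument is shorter and avoids $\Phi_u$ entirely. It rests on the elementary fact that right multiplication $A\mapsto AK$ is bounded from $(\cN,\|\cdot\|)$ to $(l^1(\cN),\|\cdot\|_1)$ with norm at most $\|K\|_1$; hence operator-norm $C^1$ of a path becomes $\|\cdot\|_1$-$C^1$ after right multiplication by a fixed $K\in l^1(\cN)$. For $K$ in the dense set $S=\{K:1_n(|K|)K=K\text{ for some }n\}$ one writes $F_uK=\bigl(F_u\,1_n(|K|)\bigr)K$, applies the hypothesis to the finite projection $1_n(|K|)\in K(\cN)$, and then extends by density. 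Your route, once repaired, yields an explicit $\Phi_u$ and isolates exactly which convergence is needed; the paper's is more economical.
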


\begin{proof} Let $K \in l^1(\cN)$ with polar decomposition $K=|K|U$. 

Let $1_n$ be the characteristic function of $[\frac 1n, \infty)$. Note that $1_n(|K|) \in l^1(\cN)$: Since $1_n(|K|) \in K(\cN)$, the operator $1-1_n(|K|)$ is Breuer-Fredholm. Thus the projection onto its kernel, which is $1_n(|K|)$, has finite trace. It follows that $K_n:=1_n(|K|)K \in l^1(\cN)$. Furthermore $K_n$ converges in $l^1(\cN)$ to $K$.

From this we conclude that the set $S=\{K \in l^1(\cN)~|~1_n(|K|)K=K\}$ is dense in $l^1(\cN)$.

Now let $K \in S$.
   
By assumption, the function 
$$t \mapsto F_t\, 1_n(|K|)$$ 
is bounded in $C^1([0,1],K(\cN))$ by $C \|1_n(|K|)\|=C$ for some $C>0$ independent of $n$ and $K$. It follows that the function 
$$t \mapsto (F_tK=F_t\, 1_n(|K|)K)$$ 
is bounded in $C^1([0,1],l^1(\cN))$ by $C\|K\|_1$. 

This shows the assertion.

\end{proof}  

In \cite{Wa} essentially the following theorem was proven with the additional assumption that each $F_u$ is the bounded transform of an unbounded operator with resolvents in $K(\cN)$. Compare the theorem with the main result in \cite{CPS}.

\begin{theorem}
Let $(F_u)_{u \in [0,1]}$ be a path of selfadjoint Breuer-Fredholm operators in $\cN$ with $\| F_u\| \le 1$ for all $u \in [0,1]$ and such that $F_0,F_1$ are invertible.  

We assume that the map
$$K(\cN) \to C^1([0,1],K(\cN)) \ ,$$
$$K \mapsto (u \mapsto F_uK) $$ 
is well-defined and bounded.

Let $\chi \in C^2_c(\R)$ be an odd function such that $\chi(1)=1$ and $\chi'(0)>0$ and such that $\chi|_{[-1,1]}$ is non-decreasing. 

Assume that $(u\mapsto \chi'(F_u)) \in C([0,1],l^1(\cN))$ and that $(u\mapsto (\chi(F_u)^2-1)) \in C^1([0,1],l^1(\cN))$. 
Then 
\begin{eqnarray*}
\spfl((F_u)_{u \in [0,1]})&=&\frac 12 \int_0^1  \tr \bigl((\frac{d}{du}F_u)\chi'(F_u)\bigr)~ du\\
&& +~ \frac 12 \tr\bigl(2P_1-1- \chi(F_1)\bigr) - \frac 12 \tr \bigl(2P_0-1-\chi(F_0)\bigr)\ ,
\end{eqnarray*} 
where $P_i= 1_{\ge 0}(F_i)$.
\end{theorem}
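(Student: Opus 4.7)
The plan is to adapt the winding-number proof of \cite[Theorem 6.4]{Wa} to the present bounded setting, letting $\chi(F_u)$ play the role that the bounded transform $D_u(1+D_u^2)^{-1/2}$ played in \cite{Wa}. Since $\|F_u\|\le 1$ and $\chi|_{[-1,1]}$ is non-decreasing with $\chi(\pm 1)=\pm 1$, one has $\|\chi(F_u)\|\le 1$ and the unitary
$$V_u:=\chi(F_u)+i\sqrt{1-\chi(F_u)^2}\in\cN$$
is well-defined. The hypothesis $\chi(F_u)^2-1\in C^1([0,1],l^1(\cN))$ guarantees that $V_uV_0^{-1}-1\in l^1(\cN)$ depends in a $C^1$ manner on $u$ in the trace norm, so the loop obtained by closing up $V_u$ at the endpoints has a well-defined winding number in the semifinite sense (\cite{CP,Wa}). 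Exactly as in \cite{Wa} this winding number equals $\spfl((F_u)_{u\in[0,1]})$ up to endpoint corrections.

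Next I would establish the Duhamel identity
$$\frac{d}{du}\tr\bigl(\chi(F_u)K\bigr)=\tr\bigl(\chi'(F_u)\tfrac{d}{du}F_u\cdot K\bigr)$$
for $K\in l^1(\cN)$ (after subtracting a suitable constant so that both sides are trace class). Under the assumption that $K\mapsto(u\mapsto F_uK)$ is bounded $C^1$ into $K(\cN)$ — which by the lemma preceding the theorem upgrades to the same statement into $l^1(\cN)$ — this identity follows from a Helffer-Sj\"ostrand (or contour-integral) representation of $\chi(F_u)$ in terms of resolvents of $F_u$, term-by-term differentiation, and the cyclicity of the trace. Substituting into the winding-number integral for $V_u$ yields the bulk contribution $\tfrac12\int_0^1\tr\bigl((\tfrac{d}{du}F_u)\chi'(F_u)\bigr)\,du$, as in the computation of \cite[Prop. 6.7]{Wa}.

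Finally the boundary terms $\tfrac12\tr(2P_i-1-\chi(F_i))$ arise when straightening the loop at $u=0,1$: the invertibility of $F_i$ together with $\chi(F_i)^2-1\in l^1(\cN)$ forces $\chi(F_i)$ to agree with the sign operator $2P_i-1$ modulo $l^1(\cN)$, and the linear deformation connecting the unitary $V_i$ to $\pm 1$ contributes precisely the stated trace, exactly as the $\eta$-invariant-type boundary terms arise in \cite{Wa} and in the analogous computation in Proposition \ref{push}.

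The main obstacle is the Duhamel step: with only the hypothesis that $K\mapsto F_uK$ is $C^1$ into $K(\cN)$ — weaker than the norm-$C^1$ assumption of \cite{CPS} — one cannot directly apply the chain rule to $\chi(F_u)$. The resolution is to express $\chi(F_u)$ through an almost-analytic extension of $\chi$ and the resolvents $(F_u-z)^{-1}$, so that all differentiations reduce to the product-differentiability already assumed, and to combine this with the corrected versions of Lemma 6.1 and Proposition 6.2 of \cite{Wa} stated at the opening of this appendix. Once this Duhamel formula is secured, the remaining steps of \cite[Theorem 6.4]{Wa} transport to the bounded setting with only notational changes.
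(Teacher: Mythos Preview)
Your overall architecture---winding number of a unitary built from $\chi(F_u)$, plus endpoint corrections coming from straightening $\chi(F_i)$ to $2P_i-1$---matches the paper, and your identification of the Duhamel step as the obstacle is exactly right. However, the resolution you sketch does not close the gap. The Helffer--Sj\"ostrand formula lets you differentiate $(F_u-z)^{-1}K$ for $K\in l^1(\cN)$, but in the winding-number integrand $\tr\bigl(e^{-\pi i(\chi(F_u)+1)}\tfrac{d}{du}e^{\pi i(\chi(F_u)+1)}\bigr)$ there is no fixed trace-class factor $K$ to anchor the resolvent identity: the unitary $e^{-\pi i(\chi(F_u)+1)}$ is not in $l^1(\cN)$, and while $e^{\pi i(\chi(F_u)+1)}-1$ is, you cannot write its $u$-derivative as a resolvent integral applied to a fixed $l^1$ element. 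Moreover, even when a $K\in l^1(\cN)$ is present, the derivative map $\dot F_u:L\mapsto \tfrac{d}{du}(F_uL)$ is only a bounded map $l^1(\cN)\to l^1(\cN)$, not left multiplication by an element of $\cN$; it does not commute with left multiplication by $(F_u-z)^{-1}$, so after cycling under the trace you are left with $\tr\bigl[\dot F_u\bigl((F_u-z)^{-1}K(F_u-z)^{-1}\bigr)\bigr]$, which does not collapse to $\tr\bigl[\chi'(F_u)\dot F_u(K)\bigr]$ unless $K$ commutes with $F_u$.

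The paper circumvents this by a different device: it introduces an increasing sequence of even cutoffs $\phi_m\in C^\infty_c(\R)$ supported inside $\supp(\chi^2-1)\cap[-1,1]$, with $\phi_{m+1}\equiv 1$ on $\supp\phi_m$, and builds approximants $\chi_m$ via $\chi_m'=C_m^{-1}\chi'\phi_m$. For each $m$ one has $\phi_n(F_u)\in l^1(\cN)$ for $n>m$, and since $f:=e^{\pi i(\chi_m+1)}-1$ satisfies $\phi_n f=f$ on $[-1,1]$, one writes $\tfrac{d}{du}f(F_u)=(\tfrac{d}{du}\phi_n(F_u))f(F_u)+\phi_n(F_u)\tfrac{d}{du}f(F_u)$. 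A telescoping argument using $\phi_{n-1}\phi_n=\phi_{n-1}$ shows the first term has zero trace after multiplication by $e^{-\pi i(\chi_m(F_u)+1)}$; the second term now carries the trace-class anchor $\phi_n(F_u)$, and the calculation of \cite[Theorem 6.4]{Wa} applies verbatim to give $\tr\bigl(e^{-\pi i(\chi_m(F_u)+1)}(\tfrac{d}{du}f(F_u))\phi_n(F_u)\bigr)=i\pi\,\tr\bigl((\tfrac{d}{du}F_u)\chi_m'(F_u)\phi_n(F_u)\bigr)$. Finally one lets $m\to\infty$ using monotone convergence in $l^1(\cN)$. The $\phi_m$ trick is the missing idea in your proposal: it manufactures the trace-class factor that your almost-analytic argument lacks.
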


\begin{proof} 
The two terms in the last line are well-defined by $(2P_i-1)-\chi(F_i)=((2P_i-1)+\chi(F_i))^{-1}(1-\chi(F_i)^2) \in l^1(\cN)$.

Let $(\phi_m)_{m\in \bbbn} \subset C^{\infty}_c(\R)$ be a sequence of even functions with the following properties:
\begin{itemize}
\item $\phi_m$ is non-decreasing on $[-1,0]$ and equals $1$ in a neighbourhood of the origin,
\item $\supp \phi_m$ is contained in the interior of $\supp (\chi^2-1) \cap [-1,1]$,
\item $\phi_m(x) \le \phi_{m+1}(x)\le 1$ for any $x \in \R$ and $\phi_{m+1}|_{\supp \phi_m}=1$,
\item $\phi_m(x)$ converges to $1$ for $m \to \infty$ for all $x$ in the interior of $\supp (\chi^2-1) \cap [-1,1]$.
\end{itemize} 

By Prop. 6.2 in \cite{Wa} the map
\begin{align*}
l^1(\cN)&\to C^1([0,1],l^1(\cN)) \\
K &\mapsto \phi(F_t)K
\end{align*}
is well-defined and continuous.

Since $\phi_m/(\chi^2-1)$ can be extended by zero to an element in $C_c(\R)$ and $\chi(F_u)^2-1 \in l^1(\cN)$, also $\phi_m(F_u) \in l^1(\cN)$.

For $m \in \bbbn$ and $x \in [-1,1]$ let 
$$\chi_m(x)=\frac{1}{C_m}\int_0^x \chi'(y)\phi_m(y) ~dy $$ 
with $C_m=\int_0^1 \chi'(y)\phi_m(y)~dy$. Extend $\chi_m$ to an odd function in $C_c^2(\R)$ such that $\supp(\chi_m^2-1)$ contains a neighbourhood of $\supp (\chi^2-1)$. 

The equality $\chi_m'(F_u)=\frac{1}{C_m}\phi_m(F_u)\chi'(F_u)$ implies by Prop. 6.2 of \cite{Wa} that 
$$(u \mapsto \chi_m'(F_u)) \in C([0,1],l^1(\cN)) \ .$$

Define $g_m \in C_c^2(\R)$ by $g_m(x):=(\chi_m^2(x)-1)/(\chi^2(x)-1)$ if $\chi^2(x) \neq 1$ and $x \in [-1,1]$, and $g_m(x)=0$ if $\chi^2(x)=1$ or $|x| \ge 1$. Since $\chi_m(F_u)^2-1=g_m(F_u)(\chi(F_u)^2-1)$, we infer from Prop. 6.2 in \cite{Wa} that $$(u \mapsto (\chi_m(F_u)^2-1)) \in C^1([0,1], l^1(\cN))$$ and then from Lemma 6.3 in \cite{Wa} that 
$$(u \mapsto (e^{\pi i(\chi_m(F_u)+1)}-1)) \in C^1([0,1], l^1(\cN))\ .$$ 

Now we show that the integral formula holds for $\chi_m$. In the end we will take the limit over $m$. Since the proof is very similar to the proof of Theorem 6.4 in \cite{Wa}, we will omit some details.

Let $G_u:=(1-u)(2P_0-1) + u\chi_m(F_0)$ and $H_u:=(1-u)\chi_m(F_1)+ u(2P_1-1)$ and 
define $(Q_u)_{u \in [-1,2]}$ by $Q_u:= \chi_m(F_u)$ for $u \in [0,1]$, $Q_u:=G_{u+1}$ for $u \in [-1,0]$ and $Q_u:=H_{u-1}$ for $u \in [1,2]$. 

It holds that
$$\frac{1}{2 \pi i} \int_0^1 \tr  \bigl(e^{-\pi i(G_u+1)}\frac{d}{du}e^{\pi i (G_u+1)}\bigr)~du + \frac{1}{2 \pi i} \int_0^1 \tr  \bigl(e^{-\pi i(H_u+1)}\frac{d}{du}e^{\pi i (H_u+1)}\bigr)~dt$$
$$=\frac 12 \tr\bigl((2P_1-1)- \chi_m(F_1)\bigr)- \frac 12 \tr\bigl((2P_0-1)- \chi_m(F_0)\bigr) \ .$$

Hence
\begin{align*}
\spfl((F_u)_{u \in [0,1]})&=
\spfl((Q_u)_{u \in [-1,2]}) \\ 
&=\frac{1}{2 \pi i} \int_0^1 \tr  \bigl(e^{-\pi i(\chi_m(F_u)+1)}\frac{d}{du}e^{\pi i ( \chi_m(F_u)+1)}\bigr)~du\\
&\quad + \frac 12 \tr\bigl((2P_1-1)- \chi_m(F_1)\bigr)- \frac 12 \tr\bigl((2P_0-1)- \chi_m(F_0)\bigr) \ .
\end{align*}

We evaluate the integral.

Set $f:=e^{\pi i (\chi_m+1)}-1$. Since 
$$\bigl(\supp f\cap [-1,1]\bigr) \subset \bigl(\supp (\chi_m^2-1) \cap [-1,1]\bigr) \subset \supp \phi_m \ ,$$ 
it holds that $\phi_{n}|_{\supp f\cap [-1,1]}=1$ for $n>m$. This implies that
\begin{align*}
\frac{d}{du} f(F_u)&= \frac{d}{du}\bigl(\phi_{n}(F_u)f(F_u)\bigr)\\
&=\bigl(\frac{d}{du}\phi_{n}(F_u)\bigr)f(F_u)+\phi_{n}(F_u)\frac{d}{du}f(F_u)  \ . 
\end{align*}

Thus
$$\tr \bigl(e^{- \pi i (\chi_m(F_u)+1)} \frac{d}{du} f(F_u)\bigr) =\tr \bigl(e^{- \pi i (\chi_m(F_u)+1)}(\frac{d}{du}\phi_{n}(F_u)f(F_u)+\phi_{n}(F_u)\frac{d}{du}f(F_u))\bigr) \ .$$

Let $n>m+2$.

Now
\begin{align*}
\lefteqn{\tr \bigl(e^{- \pi i (\chi_m(F_u)+1)}(\frac{d}{du}\phi_{n}(F_u))f(F_u)\bigr)}\\
 &=
\tr \bigl(e^{- \pi i (\chi_m(F_u)+1)}\phi_{n-1}(F_u)(\frac{d}{du}\phi_{n}(F_u))f(F_u)\bigr)\\
&=\tr \bigl(e^{- \pi i (\chi_m(F_u)+1)}\frac{d}{du}(\phi_{n-1}(F_u)\phi_{n}(F_u))f(F_u)\bigr)\\
& \quad - \tr \bigl(e^{- \pi i (\chi_m(F_u)+1)}(\frac{d}{du}\phi_{n-1}(F_u))\phi_{n}(F_u)f(F_u)\bigr)\ .
\end{align*}
These two terms cancel out since they both equal 
$\tr \bigl(e^{- \pi i (\chi_m(F_u)+1)}(\frac{d}{du}\phi_{n-1}(F_u))f(F_u)\bigr)$:

The first by $\phi_{n-1}\phi_n=\phi_{n-1}$, and the second by $\phi_nf|_{[-1,1]}=f|_{[-1,1]}$. 

It follows that 
$$\tr \bigl(e^{- \pi i (\chi_m(F_u)+1)} \frac{d}{du} f(F_u)\bigr) =\tr \bigl(e^{- \pi i (\chi(F_u)+1)}(\frac{d}{du}f(F_u))\phi_n(F_u)\bigr) \ .$$

A calculation as in the proof of Theorem 6.4 in \cite{Wa} yields that
$$\tr \bigl(e^{- \pi i (\chi_m(F_u)+1)} (\frac{d}{du} f(F_u)) \phi_n(F_u)\bigr) =i\pi ~\tr((\frac{d}{du}F_u)\chi_m'(F_u)\phi_n(F_u)\bigr) \ .$$

Using that $\chi_m'(F_u)\phi_n(F_u)=\chi_m'(F_u)$ we obtain
\begin{align*}
\spfl((F_u)_{u \in [0,1]})& = \frac{1}{2}  \int_0^1  \tr \bigl((\frac{d}{du}F_u)\chi_m'(F_u)\bigr) ~du \\
& \quad + \frac 12 \tr\bigl((2P_1-1)- \chi_m(F_1)\bigr)- \frac 12 \tr\bigl((2P_0-1)- \chi_m(F_0)\bigr) \ .
\end{align*}

Now we consider the limit $m \to \infty$.

First note that $C_m$ converges to $1$. We fix $u$. The sequence of positive operators $C_m\chi_m'(F_u)=\chi'(F_u)\phi_m(F_u) \in l^1(\cN)$ is non-decreasing with supremum $\chi'(F_u)$. Thus $C_m\tr\bigl((\frac{d}{du}F_u)\chi_m'(F_u)\bigr)$ converges to $\tr\bigl((\frac{d}{du}F_u)\chi'(F_u)\bigr)$.

Furthermore for some $C>0$, independent of $m$ and $u$,
\begin{align*}
C_m|\tr\bigl((\frac{d}{du}F_u)\chi_m'(F_u)\bigr)| &\le  \|(\frac{d}{du}F_u) \phi_m(F_u)\| \,\tr(\chi'(F_u)) \\
&\le C \tr(\chi'(F_u)) \ .
\end{align*}
 
Thus by Lebesgue's Lemma $\int_0^1  \tr \bigl((\frac{d}{du}F_u)\chi_m'(F_u)\bigr) \, du$ converges to   $\int_0^1  \tr \bigl((\frac{d}{du}F_u)\chi'(F_u)\bigr) \, du$.

Furthermore $C_m(1+\chi_m(x)) \le 1+\chi(x)$ for $x<0$ and $C_m(1-\chi_m(x)) \le 1-\chi(x)$ for $x>0$. It follows that
$$C_m(1- (2P_i-1)\chi_m(F_i)) \le 1 - (2P_i-1)\chi(F_i) \ .$$
Both sides are positive. The left hand side converges to the right hand side for $m \to \infty$ in $\cN$, hence also in $l^1(\cN)$. Since $$(2P_i-1)-\chi_m(F_i)=(2P_i-1)(1- (2P_i-1)\chi_m(F_i)) \ ,$$ 
it follows that $\tr\bigl((2P_i-1)-\chi_m(F_i)\bigr)$ converges to $\tr\bigl((2P_i-1)-\chi(F_i)\bigr)$.
\end{proof}


\begin{thebibliography}{111}
\bibitem[An]{An} P. Antonini, \emph{The Atiyah Patodi Singer index formula for measured foliations}, preprint arXiv:0907.0800
\bibitem[An2]{An2} P. Antonini, \emph{The A.P.S. signature formula for measured foliations}, preprint arXiv:0907.0801v1
\bibitem[A]{A} M. F. Atiyah, ``Elliptic operators, discrete groups and von Neumann algebras'', \emph{Asterisque} 32-33, 1976, pp. 43-72  
\bibitem[BCPRSW]{bcp} M.-F. Benameur, A. L. Carey, J. Phillips, A. Rennie, F. A. Sukochev, K. P. Wojciechowski, ``An analytic approach to spectral flow in von Neumann algebras'', \emph{Analysis, geometry and topology of elliptic operators}, World Scientific, 2006, pp. 297-352
\bibitem[BH]{BH} M.-F. Benameur, J. Heitsch, \emph{The twisted higher harmonic signature for foliations}, preprint arXiv:0711.0352v2
\bibitem[BW]{BW} B. Boo{\ss}-Bavnbek, K. P. Wojciechowski, \emph{Elliptic boundary problems for Dirac operators}, Birkh\"auser, 1993
\bibitem[BLP]{blp} B. Boo{\ss}-Bavnbek, M. Lesch, J. Phillips, ``Unbounded Fredholm operators and spectral flow", \emph{Can. J. Math.} 57, 2005, pp. 225-250 
\bibitem[B]{B1} M. Breuer, ``Fredholm theories in von Neumann algebras I'', \emph{Math. Ann.} 178, 1968, pp. 243-254
\bibitem[B2]{B2} M. Breuer, ``Fredholm theories in von Neumann algebras II'', \emph{Math. Ann.} 180, 1969, pp. 313-325
\bibitem[Bu]{bu} U. Bunke, ``A $K$-theoretic relative index theorem and Callias-type Dirac operators'', \emph{Math. Ann.} 303, 1995, pp. 241-279
\bibitem[CP]{CP} A. L. Carey, J. Phillips, ``Spectral flow in Fredholm modules, eta invariants and the JLO cocycle'', \emph{$K$-Theory} 31, 2004, pp. 135-194
\bibitem[CPRS]{CPRS} A. L. Carey, J. Phillips, A. Rennie, F. A. Sukochev, ``The local index formula in semifinite von Neumann algebras II: The even case'', \emph{Adv. Math.} 202, 2006, pp. 517-554 
\bibitem[CPS]{CPS} A. L.  Carey, D. Potapov, F. A. Sukochev. ``Spectral flow is the integral of one forms on the Banach manifold of self adjoint Fredholm operators", \emph{Adv. Math.} 222, 2009, pp. 1809-1849
\bibitem[CG]{CG2} J. Cheeger, M. Gromov, ``Bounds on the von Neumann dimension of $L^2$-cohomology and the Gauss-Bonnet theorem for open manifolds'', \emph{J. Diff. Geom.} 21, 1985, pp. 1-34.
\bibitem[Co]{Co} A. Connes, ``Sur la th\'eorie non commutative de l'int\'egration'',  \emph{Alg\`ebres d'op\'erateurs} (Lecture Notes in Mathematics 725), Springer, 1979, pp. 19-143 
\bibitem[Da]{da} E. B. Davies, \emph{Spectral theory and differential operators} (Cambridge Studies in Advanced Mathematics 42), Cambridge Univ. Press, 1995
\bibitem[Di]{Di} J. Dixmier, \emph{Von Neumann algebras} (North-Holland Mathematical Library 27), North Holland, 1981
\bibitem[GR]{GR} D. Gong, M. Rothenberg, \emph{Analytic torsion forms for noncompact fiber bundles}, MPIM preprint 1997-105, available 
at www.mpim-bonn.mpg.de/preprints  
\bibitem[HL]{HL2} J. Heitsch, C. Lazarov, ``Homotopy invariance of foliation Betti numbers'', \emph{Invent. Math.} 104, 1991, pp. 321-347 
\bibitem[LP]{LP} E. Leichtnam, P. Piazza, ``Dirac index classes and the noncommutative spectral flow'', \emph{J. Funct. Anal.} 200, 2003, pp. 348-400
\bibitem[Le]{Le} M. Lesch, ``The uniqueness of the spectral flow on spaces of unbounded self-adjoint Fredholm operators'', \emph{Contemp. Math.} 366, 2005, pp. 193-224
\bibitem[LS]{LS}  W. L\"uck, T. Schick, ``Various $L^2$-signatures and a topological $L^2$-signature theorem'', {\it High-dimensional manifold topology}, World Scientific, 2003, pp. 362-399 
\bibitem[Me]{Me} R. Melrose, \emph{The Atiyah-Patodi-Singer index theorem} (Research Notes in Mathematics 4),
A. K. Peters, 1993.
\bibitem[MS]{MS} C. C. Moore, C. L. Schochet, \emph{Global analysis on foliated spaces}, 2nd ed. (Mathematical Sciences Research Institute Publications 9), Cambridge Univ. Press, 2006 
\bibitem[Ph]{Ph} J. Phillips, ``Spectral flow in type I and II factors - a new approach'', \emph{Cyclic cohomology and noncommutative geometry} (Fields Institute Communications 17), Am. Math. Soc., 1997, pp. 137-153
\bibitem[Pu]{pu} A. Pushnitski, ``The spectral flow, the Fredholm index, and the spectral shift function'', \emph{Spectral theory of differential operators} (Advances in Mathematical Sciences 62), Am. Math. Soc., 2008, pp. 141-155
\bibitem[Rb]{Ra} P. J. Rabier, ``The Robbin-Salamon index theorem in Banach spaces with UMD'', \emph{Dyn. Partial Differ. Equ.} 1, 2004, pp. 303-337 
\bibitem[Rm]{Ram} M. Ramachandran, ``Von Neumann index theorems for manifolds with boundary", \emph{J. Diff. Geom.} 38, 1993, pp. 315-349 
\bibitem[RS]{RS} J. Robbin, D. Salamon, ``The spectral flow and the Maslov index'',  \emph{Bull. Lond. Math. Soc.} 27, 1995, pp. 1-33 
\bibitem[V]{Va} B. Vaillant, \emph{Index theory for coverings}, preprint arXiv:0806.4043
\bibitem[W]{wans} C. Wahl, ``On the noncommutative spectral flow'', \emph{J. Ramanujan Math. Soc.} 22, 2007, pp. 135-187
\bibitem[W2]{Wa} C. Wahl, ``Spectral flow and winding number in von Neumann algebras'', \emph{J. Inst. Math. Jussieu} 7, 2008, pp. 589-619 
\end{thebibliography}
\end{document}